\newtheorem{theorem}{Theorem}[section]
\newtheorem{lemma}[theorem]{Lemma}
\newtheorem{proposition}[theorem]{Proposition}
\newtheorem{corollary}[theorem]{Corollary}
\theoremstyle{definition}
\newtheorem{example}[theorem]{Example}
\theoremstyle{remark}
\newtheorem{remark}[theorem]{Remark}
\numberwithin{equation}{section}
\newcommand{\blankbox}[2]{}
\begin{document}

\title[The maximum diam theorem on Finsler manifolds]
{The maximum diam theorem on Finsler manifolds}

\author{Songting Yin}
\author{Qun He}

\thanks{This project is supported by
NNSFC(11471246) and AHNSF (1608085MA03).}

\subjclass[2010]{Primary 53C60; Secondary  35P15}

\date{}
\keywords{ Finsler sphere; the maximum diam; the weighted Ricci curvature.}
\maketitle
\vspace{-6mm}
\begin{center}\footnotesize
Department of Mathematics and Computer Science,
 Tongling University, \\Tongling, 244000 Anhui, China;
Email: yst419@163.com\\
School of Mathematical Sciences, Tongji University,
Shanghai, 200092 China; \\Email: hequn@mail.tongji.edu.cn
\end{center}

\begin{abstract}
We prove that, for a Finsler space, if the weighted Ricci curvature is bounded below by a positive number
and the diam attains its maximal value, then it is isometric to a standard  Finsler sphere.
As an application, we show that  the first eigenvalue of the Finsler-Laplacian attains its lower bound
if and only if the Finsler manifold is isometric to a standard Finsler sphere, and moreover,
we obtain an explicit 1-st eigenfunction on the sphere.
\end{abstract}

\section*{ Introduction}
The classical Myers's theorem (\cite{M}) and Cheng's maximum diam theorem (\cite{Ch}) in Riemannian geometry are well known.
As for their generalizations, we can refer to \cite{BL}, \cite{Q} and \cite{R}.

In  Finsler geometry,  the Myers type theorem (\cite{Sh1}) states that if $(M,F,d\mu)$ is a complete, connected
Finsler $n$-manifold  such that $\textmd{Ric}\geq (n-1)k > 0$, then its diameter $Diam(M)\leq\frac{\pi}{\sqrt{k}}$.
Using the weighted Ricci curvature condition, Ohta (\cite{O1}) proved an analogue of
Myers's theorem (see Lemma 1.2 in Section 1 below). As in the case of Riemannian geometry, it is natural to ask what happens if the
diameter attains its maximal value. In \cite{KY}, the authors obtained the following Cheng type maximum diam theorem.

\vspace{1.5mm}
\hspace{-4.5mm}\textbf{Theorem A.} (\cite{KY}) \emph{Let $(M,F,d\mu)$ be a complete reversible connected Finsler $n$-manifold
with the Busemann-Hausdorff volume form. If the weighted Ricci curvature satisfies $\emph{Ric}_n\geq (n-1)k > 0$
and $Diam(M)=\frac{\pi}{\sqrt{k}}$,
then $(M,F)$ is isometric to the Euclidean sphere $\mathbb{S}^n(\frac{1}{\sqrt{k}})$.}
\vspace{1mm}

This result describes  the rigidity for the reversible Finsler manifolds, and then reduces to the Riemannian case.
However, as we know, most Finsler manifolds are not reversible, such as Randers spaces. We might, of course,
 wonder  whether or not there exist general Finsler manifolds attaining  the maximum diam.
 In this paper, we will give a positive answer to this question.

 Let $\mathbb{S}^n$ be an $n$-sphere equipped with a Finsler metric $F$ and a volume form $d\mu$.
If it has constant flag curvature $k$, vanishing $S$ curvature and
$Diam=\frac{\pi}{\sqrt{k}}$, then we call it a standard Finsler sphere
and denote it by $(\mathfrak{S}^n(\frac{1}{\sqrt{k}}),F,d\mu)$.
This definition is inspired by Bao-Shen's result (see Example 2.4 below).
Clearly, the Euclidean sphere $\mathbb{S}^n(\frac{1}{\sqrt{k}})$
 is certainly a standard Finsler sphere.  If $F$ is a Randers metric, we call it a standard Randers sphere denoted by
$(\mathcal{S}^n(\frac{1}{\sqrt{k}}),F,d\mu)$. Here $F$ is determined by navigation
data $(\mathfrak{g},W)$, where $\mathfrak{g}$ is the standard sphere metric and $W$ is a Killing vector.
The more details are shown in Section 2 below.

\vspace{1.5mm}
\hspace{-4.5mm}\textbf{Theorem 0.1.} \emph{Let $(M,F,d\mu)$ be a complete connected Finsler n-manifold
with the Busemann-Hausdorff volume form. If the weighted Ricci curvature satisfies $\emph{Ric}_n\geq(n-1)k>0$
and  $Diam(M)=\frac{\pi}{\sqrt{k}}$,
then $(M,F)$ is isometric to a standard Finsler sphere.
In particular, if $n\geq 3$ and F is an $(\alpha,\beta)$ metric,
then $(M,F)$ is isometric to a standard  Randers sphere.}
\vspace{1mm}

The weighted Ricci curvature $\textmd{Ric}_n\geq(n-1)k>0$ means that $\textmd{Ric}\geq(n-1)k>0$
and the $S$ curvature $S=0$ (see the definition in Sec.1 below).
The above definition of the standard Finsler sphere is reasonable. We will see that
in the Finsler setting the vanishing $S$ curvature is a necessary condition and
one can not deduce the maximum diam under a single condition of constant flag curvature $k$.
If $F$ is reversible, the Finsler sphere is just the standard Euclidean sphere $\mathbb{S}^n(\frac{1}{\sqrt{k}})$,
and if $F$ is not reversible, the Finsler spheres never exist singly but always in pairs.
Unfortunately,  we can not further characterize the standard Finsler sphere metric.
 This is because it is not clear, until now, about the classification for
 Finsler manifolds with constant flag curvature.

Theorem 0.1 covers Theorem A and shows that, apart from the Euclidean sphere, the maximum diam can be
attained by countless Finsler metrics on a sphere. In general,
if the  manifold has an arbitrary volume form $d\mu$ with a
suitable restriction, and the weighted Ricci curvature satisfies $\textmd{Ric}_N\geq(N-1)k>0$
for some real number $N\in[n,\infty)$, the conclusion still holds (see Theorem 2.1 and 2.8 below).
Since the Finsler manifolds discussed are not necessarily reversible, some methods in \cite{KY}
do not work and we have to explore  a new trail. For example, the reverse of a geodesic $\gamma$  is not necessarily a geodesic, the distance
 $d_F(p,q)\neq d_F(q,p)$, the gradient and Laplacian of a function $\nabla(-f)\neq-\nabla f,\Delta(-f)\neq-\Delta f$
in general, and so on. These lead to many difficulties in computation and  reasoning.
For this, we make full use of the reverse Finsler metric $\overleftarrow{F}$ which has many relationships with the metric $F$ on geodesic,
distance, gradient, Laplacian, curvature, and so on. This technique gives another way to deal with nonreversible Finsler manifolds (see
the proof for details in Theorem 2.1 below).

As an application, we can use it to characterize the rigidity of Finsler manifolds when the first eigenvalue
of Finsler Laplacian attains its lower bound. More precisely, we obtain Obata type rigidity theorem in
the following:

\vspace{1.5mm}
\hspace{-4.5mm}\textbf{Theorem 0.2.} \emph{Let $(M,F,d\mu)$ be a complete  connected Finsler n-manifold
with the Busemann-Hausdorff volume form. If the weighted Ricci curvature satisfies $\emph{Ric}_n\geq(n-1)k>0$,
then the first eigenvalue of Finsler-Laplacian $\lambda_1=nk$
if and only if  $(M,F)$ is isometric to a standard Finsler sphere.
In particular, if $n\geq 3$ and F is an $(\alpha,\beta)$ metric, then the equality holds if and only if  $(M,F)$ is isometric to a
standard Randers sphere.}
\vspace{1mm}

In \cite{YHS1}, the authors obtained Obata type rigidity theorem by using Theorem A. Since the condition is rather
strong, the manifold, which attains the lower bound of the first eigenvalue, reduces to the Euclidean sphere.
By contrast, Theorem 0.2 demonstrates that there are infinite non-Riemannian manifolds satisfying such a property. In addition,
we also construct the first eigenfunctions of the Finsler Laplacian(see the proof of Theorem 3.1 and 3.2 below).

The paper is organized as follows. In Section 1, some
fundamental concepts and formulas which are necessary for the present paper are
given, and some lemmas are contained. The maximum diam theorem and Obata type rigidity theorem
are then proved in Section 2  and Section 3, respectively.

\section{ Preliminaries}
Let $M$ be a smooth $n$-manifold and $\pi :TM\to M$
be the natural projection from the tangent bundle $TM$. Let
$(x,y)$ be a point of $TM$ with $x\in M$, $y\in T_xM$, and let
$(x^i,y^i)$ be the local coordinates on $TM$ with $y=y^i\partial
/\partial x^i$. A {\it Finsler metric} on $ M$ is a function
$F:TM\to [0,+\infty )$ satisfying the following properties:

(i){\it Regularity}: $F(x,y)$ is smooth in $TM\setminus 0$;

(ii){\it Positive homogeneity}: $F(x,\lambda y)=\lambda F(x,y)$
for $\lambda >0$;

(iii){\it Strong convexity}: The fundamental quadratic form
$$g:=g_{ij}(x,y)dx^i\otimes dx^j,\qquad g_{ij}:=\frac 12[F^2]_{y^i
y^j}$$
 is positively definite.

Let $X=X^{i}\frac{\partial}{\partial x^{i}}$ be a vector field. Then the \emph{covariant derivative}
of $X$  by $v\in T_{x}M$ with reference  vector $w\in T_{x}M\backslash 0$ is defined by
$$D^{w}_{v}X(x):=\left\{v^{j}\frac{\partial X^{i}}{\partial x^{j}}(x)
+\Gamma^{i}_{jk}(w)v^{j}X^{k}(x)\right\}\frac{\partial}{\partial x^{i}},$$
where $\Gamma^{i}_{jk}$ denote the coefficients of the Chern connection.

Given two linearly independent vectors $V,W\in T_{x}M\backslash0$, the flag curvature is defined by
$$K(V,W):=\frac{g_{V}(R^{V}(V,W)W,V)}{g_{V}(V,V)g_{V}(W,W)-g_{V}(V,W)^{2}},$$
where $R^{V}$ is the \emph{Chern curvature}:
$$R^{V}(X,Y)Z=D^{V}_{X}D^{V}_{Y}Z-D^{V}_{Y}D^{V}_{X}Z-D^{V}_{[X,Y]}Z.$$
 Then the Ricci curvature for $(M,F)$ is defined as
  $$\textmd{Ric}(V)=\sum_{\alpha=1}^{n-1} K(V,e_{\alpha}),$$
where $e_{1},\cdots,e_{n-1},\frac{V}{F(V)}$ form an orthonormal basis of $T_{x}M$ with respect to  $g_{V}$.

Let $(M,F,d\mu)$ be a Finsler $n$-manifold. Given a vector
$V\in T_xM$, let $\gamma :(-\varepsilon ,\varepsilon )\to M$ be a geodesic with $\gamma (0)=x,~\dot\gamma (0)=V$.
Define
$${\dot S}(V):=F^{-2}(V)\frac{d}{d t}[S(\gamma (t),\dot\gamma(t))]_{t=0},$$
where $S(V)$ denotes the $S$-curvature at $(x,V)$.
The {\it weighted Ricci curvature} of $(M,F,d\mu)$ is defined by (see \cite{O1})
$$\left\{\begin{array}{l}{\textmd{Ric}}_{n}(V):=\left\{\begin{array}{l}{\textmd{Ric}}(V)+\dot S(V),\quad{\rm for}~~S(V)=0,\\
-\infty,\qquad\qquad\qquad{\rm
otherwise},\end{array}\right.\\
{\textmd{Ric}}_{N}(V):={\textmd{Ric}}(V)+\dot S(V)-\frac {S(V)^{2}}{(N-n)F(V)^{2}},~~\forall~~N\in
   (n,\infty),\\
{\textmd{Ric}}_{\infty}(V):={\textmd{Ric}}(V)+\dot S(V),\end{array}\right.$$

For a smooth function $u$, the \emph{gradient vector} of $u$ at $x$ is defined by $\nabla u(x):=\mathcal{L}^{-1}(du)$,
where $\mathcal{L}:T_xM\to T_x^*M$ is the Legendre transform.
Let $V=V^{i}\frac{\partial}{\partial x^{i}}$ be a smooth vector field on $M$. The \emph{divergence} of $V$ with
respect to an arbitrary volume form $d\mu$ is defined by
$$ \textmd{div}V :=\sum_{i=1}^{n}\left(\frac{\partial V^{i}}{\partial x^{i}}+V^{i}\frac{\partial \Phi}{\partial x^{i}}\right),$$
where $d\mu =e^{\Phi}dx$. Then the \emph{Finsler-Laplacian} of $u$ can be defined by(\cite{GS})
$$\Delta u:=\textmd{div}(\nabla u), $$
 where the equality is in the weak $W^{1,2}(M)$ sense.

We remark here that, up to now, except for this definition, there have been several different definitions of
Finsler-Laplacians, introduced by Antonelli-Zastawniak (\cite{AL}), Bao-Lackey(\cite{BL2}), Barthelm(\cite{B}) and
Centroe(\cite{C}), respectively.

   Using the Finsler-Laplacian, Ohta obtained  the following comparison theorems under the weighted Ricci curvature condition.

\begin{lemma}\cite{OS}
 Let $(M,F,d\mu)$ be a Finsler n manifold. If the weighted Ricci curvature
satisfies $\emph{Ric}_{N}\geq (N-1)k>0,N\in[n,\infty)$,  then the Laplacian of the distance function $r(x)=d_{F}(p,x)$
from any given point $p\in M$ can be estimated as follows:
$$\Delta r\leq(N-1)\sqrt{k}\cot(\sqrt{k}r)$$
pointwise on $M\backslash (\{p\}\cup \textmd{Cut}(p))$  and in the sense of distributions on $M\backslash \{p\}$.
 \end{lemma}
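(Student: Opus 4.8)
This is the Finsler weighted Laplacian comparison theorem of \cite{OS}; the plan is to reduce it to a one--dimensional Riccati comparison along minimizing geodesics issuing from $p$. Fix $x\in M\backslash(\{p\}\cup\textmd{Cut}(p))$ and let $\gamma:[0,r(x)]\to M$ be the unique unit--speed minimizing geodesic with $\gamma(0)=p$ and $\gamma(r(x))=x$; on a neighbourhood of $x$ the function $r=d_{F}(p,\cdot)$ is smooth, $\nabla r|_{\gamma(t)}=\dot\gamma(t)$, and $F(\nabla r)\equiv1$. Put $\varphi(t):=\Delta r(\gamma(t))$. The first and main step is to establish the Riccati inequality
$$\varphi'(t)+\frac{\varphi(t)^{2}}{N-1}\;\leq\;-\textmd{Ric}_{N}(\dot\gamma(t))\;\leq\;-(N-1)k,\qquad 0<t\leq r(x).$$

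For this I would apply the Finsler Bochner--Weitzenb\"ock identity of \cite{OS} to $u=r$ on the smooth region. There $F(\nabla r)^{2}/2$ is locally constant, hence $\Delta^{\nabla r}\!\big(F(\nabla r)^{2}/2\big)=0$, and along $\gamma$ the identity collapses to
$$\varphi'(t)=D(\Delta r)(\nabla r)\big|_{\gamma(t)}=-\textmd{Ric}_{\infty}(\dot\gamma(t))-\big\|\nabla^{2}r\big\|^{2}_{HS(\nabla r)}.$$
Since $\nabla^{2}r(\nabla r,\cdot)=0$, Cauchy--Schwarz on the $(n-1)$--dimensional $g_{\dot\gamma}$--orthogonal complement of $\dot\gamma$ gives $\|\nabla^{2}r\|^{2}_{HS(\nabla r)}\geq\big(\textmd{tr}_{g_{\dot\gamma}}\nabla^{2}r\big)^{2}/(n-1)$, and $\textmd{tr}_{g_{\dot\gamma}}\nabla^{2}r=\varphi(t)+S(\dot\gamma(t))$, the correction being the $S$--curvature term contributed by the density of $d\mu$ (equivalently, by $\nabla r=\textmd{grad}_{g_{\nabla r}}r$ together with $\textmd{div}_{\mu}$). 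Writing $\textmd{Ric}_{\infty}=\textmd{Ric}_{N}+S(\dot\gamma)^{2}/(N-n)$ and invoking, for $N>n$, the elementary inequality $s^{2}/(N-n)+(\varphi+s)^{2}/(n-1)\geq\varphi^{2}/(N-1)$ valid for all $s\in\mathbb{R}$ (its minimum, attained at $s=-(N-n)\varphi/(N-1)$, is precisely what produces the sharp denominator $N-1$), one obtains the displayed inequality; when $N=n$ the hypothesis forces $S\equiv0$, hence $\textmd{Ric}_{\infty}=\textmd{Ric}_{n}$, and the computation is the same with no $S$--term.

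For the second step I would compare $\varphi$ with the model $\bar\varphi(t):=(N-1)\sqrt{k}\cot(\sqrt{k}\,t)$, the exact solution of $\bar\varphi'+\bar\varphi^{2}/(N-1)=-(N-1)k$ with $\bar\varphi(t)\sim(N-1)/t$ as $t\to0^{+}$. The substitution $\varphi=(N-1)y'/y$, $y(t)=\exp\!\int_{t_{0}}^{t}\varphi(s)(N-1)^{-1}\,ds$, converts the Riccati inequality into $y''+ky\leq0$, while $\bar y(t)=\sin(\sqrt{k}\,t)$ obeys $\bar y''+k\bar y=0$ and $\bar\varphi=(N-1)\bar y'/\bar y$. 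Since $\varphi(t)=(n-1)/t+O(1)$ as $t\to0^{+}$ and $N\geq n$, one has $y(t)\sim c\,t^{(n-1)/(N-1)}$ with exponent $\leq1$, so $w:=y'\bar y-y\bar y'$ satisfies $w(0^{+})=0$ and $w'=y''\bar y-y\bar y''\leq0$; because $\bar\varphi\to-\infty$ at $t=\pi/\sqrt{k}$, $r$ can be smooth along $\gamma$ only for $t<\pi/\sqrt{k}$ (the Myers--type bound), where $\bar y>0$ and hence $w\leq0$ yields $\varphi\leq\bar\varphi$. This proves the pointwise estimate on $M\backslash(\{p\}\cup\textmd{Cut}(p))$. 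Finally, to obtain it on all of $M\backslash\{p\}$ in the sense of distributions I would run Calabi's barrier trick: for $x\in\textmd{Cut}(p)\backslash\{p\}$ joined to $p$ by a minimizing geodesic $\gamma$, the functions $r_{\varepsilon}:=\varepsilon+d_{F}(\gamma(\varepsilon),\cdot)$ are smooth near $x$ for small $\varepsilon>0$ (the cut point of $\gamma(\varepsilon)$ along $\gamma$ lies strictly beyond $x$), satisfy $r_{\varepsilon}\geq r$ with equality at $x$, and by the pointwise estimate obey $\Delta r_{\varepsilon}(x)\leq(N-1)\sqrt{k}\cot\!\big(\sqrt{k}(r(x)-\varepsilon)\big)$; letting $\varepsilon\to0$ produces at every point smooth upper support functions for $r$ whose Laplacian bound at the point tends to $(N-1)\sqrt{k}\cot(\sqrt{k}\,r(x))$, which is enough to pass the inequality to the distributional level by the standard argument.

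The principal obstacle is the Bochner step. Since $\Delta$ is a nonlinear operator on a Finsler manifold, the Bochner formula has to be used in the linearised form of \cite{OS} on the weighted Riemannian manifold $(M,g_{\nabla r},d\mu)$, and one must keep careful track of how $S(\dot\gamma)$, $\dot S(\dot\gamma)$ and the density of $d\mu$ combine, so that exactly $\textmd{Ric}_{N}$ --- with the sharp constant $N-1$ --- appears on the right-hand side. Everything after that is the classical Sturm/Riccati comparison together with the classical cut--locus argument.
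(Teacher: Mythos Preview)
The paper does not supply a proof of this lemma; it is simply quoted from \cite{OS} as a known comparison theorem and then used as a tool. So there is no ``paper's own proof'' to compare against.

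Your sketch is correct and is the standard route to the weighted Laplacian comparison. In fact the paper itself, later in the proof of Theorem~2.1, carries out essentially the same Riccati computation in the equality case: equations~(\ref{4})--(\ref{5}) (from \cite{WX}) give
\[
\frac{\partial}{\partial r}\,\textmd{tr}_{\nabla r}H(r)=-\textmd{Ric}(\nabla r)-\sum_{i,j}[H(r)(E_i,E_j)]^{2},\qquad \Delta r=\textmd{tr}_{\nabla r}H(r)-S(\nabla r),
\]
and the algebraic inequality~(\ref{7}) is exactly your ``elementary inequality'' $s^{2}/(N-n)+(\varphi+s)^{2}/(n-1)\geq\varphi^{2}/(N-1)$ after the substitution $a=\textmd{tr}_{\nabla r}H(r)$, $b=S(\nabla r)$, $\varphi=a-b$. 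The only stylistic difference is that you phrase the first step via the linearised Bochner formula of \cite{OS} applied to $u=r$, whereas the paper (and \cite{WX}) work directly with the radial Riccati equation for the Hessian of $r$; these are equivalent formulations along $\gamma$. Your Sturm comparison with $\bar y=\sin(\sqrt{k}\,t)$ and the Calabi barrier argument for the distributional extension across the cut locus are the standard completions and are fine.
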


 \begin{lemma}\cite{O1}
 Let $(M,F,d\mu)$ be a Finsler n manifold. If the weighted Ricci curvature satisfies $\emph{Ric}_{N}\geq (N-1)k>0,N\in[n,\infty)$,
 then $Diam M\leq\frac{\pi}{\sqrt{k}}$, and for any $0<r<R$, it holds that
 $$\max\left\{\frac{\emph{vol}^{d\mu}_FB^+_x(R)}{\emph{vol}^{d\mu}_FB^+_x(r)},\frac{\emph{vol}^{d\mu}_FB^-_x(R)}{\emph{vol}^{d\mu}_FB^-_x(r)}\right\}
\leq \frac{\int_0^{R}(\frac{\sin \sqrt{k}t}{\sqrt{k}})^{N-1}dt}{\int_0^{r}(\frac{\sin \sqrt{k}t}{\sqrt{k}})^{N-1}dt}.$$
\end{lemma}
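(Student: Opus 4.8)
The plan is to derive both assertions from the Laplacian comparison of Lemma~1.1 together with integration in geodesic polar coordinates. Write $\mathfrak{s}_k(t):=\sin(\sqrt{k}\,t)/\sqrt{k}$ on $(0,\pi/\sqrt{k})$, so that $(\log\mathfrak{s}_k(t)^{N-1})'=(N-1)\sqrt{k}\cot(\sqrt{k}\,t)$ is precisely the right-hand side in Lemma~1.1. For the diameter bound, suppose for contradiction that $d_F(p,q)=:L>\pi/\sqrt{k}$ for some $p,q\in M$, and let $\gamma:[0,L]\to M$ be a unit-speed minimal geodesic from $p$ to $q$. For every $t\in(0,L)$ the point $\gamma(t)$ is not a cut point of $p$, so $r:=d_F(p,\cdot)$ is smooth near $\gamma(t)$ and Lemma~1.1 gives $\Delta r(\gamma(t))\le(N-1)\sqrt{k}\cot(\sqrt{k}\,t)$. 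Since $\pi/\sqrt{k}\in(0,L)$ and $\cot(\sqrt{k}\,t)\to-\infty$ as $t\uparrow\pi/\sqrt{k}$, this forces $\Delta r(\gamma(t))\to-\infty$, contradicting the continuity, hence boundedness, of $t\mapsto\Delta r(\gamma(t))$ on the compact arc $\gamma([\varepsilon,\pi/\sqrt{k}])$ for any small $\varepsilon>0$. Hence $\mathrm{Diam}\,M\le\pi/\sqrt{k}$.

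For the forward volume comparison, fix $x\in M$ and use $\exp_x$ to introduce geodesic polar coordinates: the set $M\setminus(\{x\}\cup\mathrm{Cut}(x))$ is parametrized by pairs $(r,v)$ with $v$ in the indicatrix $S_xM:=\{v\in T_xM:F(x,v)=1\}$ and $0<r<c(v)$, where $c(v)\in(0,\infty]$ is the cut value along $t\mapsto\exp_x(tv)$, and we write $d\mu=\hat\sigma_x(r,v)\,dr\,d\nu_x(v)$ for the induced density. Along each ray the identity $\partial_r\log\hat\sigma_x(r,v)=\Delta r|_{\exp_x(rv)}$ holds, so Lemma~1.1 gives $\partial_r\log\hat\sigma_x(r,v)\le\partial_r\log\mathfrak{s}_k(r)^{N-1}$ on $(0,c(v))$; thus $r\mapsto\hat\sigma_x(r,v)/\mathfrak{s}_k(r)^{N-1}$ is non-increasing there, and since $\hat\sigma_x\ge0$ it stays non-increasing on all of $(0,\infty)$ after being set to $0$ for $r\ge c(v)$. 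Integrating in $v$, the function
$$A(r):=\int_{S_xM}\mathbf 1_{\{r<c(v)\}}\,\hat\sigma_x(r,v)\,d\nu_x(v)$$
inherits the property that $r\mapsto A(r)/\mathfrak{s}_k(r)^{N-1}$ is non-increasing, while $\mathrm{vol}^{d\mu}_FB^+_x(\rho)=\int_0^\rho A(t)\,dt$. Finally one invokes the elementary fact that $A/h$ non-increasing (with $A,h>0$) implies $R\mapsto\big(\int_0^RA\big)\big/\big(\int_0^Rh\big)$ non-increasing, whose derivative has the sign of $\int_0^R\big(\tfrac{A(R)}{h(R)}-\tfrac{A(t)}{h(t)}\big)h(t)h(R)\,dt\le0$; with $h=\mathfrak{s}_k^{N-1}$ this yields, for $0<r<R$,
$$\frac{\mathrm{vol}^{d\mu}_FB^+_x(R)}{\mathrm{vol}^{d\mu}_FB^+_x(r)}\le\frac{\int_0^R\big(\sin(\sqrt{k}\,t)/\sqrt{k}\big)^{N-1}dt}{\int_0^r\big(\sin(\sqrt{k}\,t)/\sqrt{k}\big)^{N-1}dt},$$
which is the forward-ball half of the statement.

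For the backward balls, note that $B^-_x(\rho)$ is exactly the forward ball of radius $\rho$ about $x$ for the reverse Finsler metric $\overleftarrow F(x,y):=F(x,-y)$, which carries the same volume form $d\mu$. A direct computation from the definitions gives $\overleftarrow{\mathrm{Ric}}_N(v)=\mathrm{Ric}_N(-v)$ — the flag curvature, the $S$-curvature and the distortion all transform under $v\mapsto-v$ — so the hypothesis $\mathrm{Ric}_N\ge(N-1)k$ is inherited by $\overleftarrow F$, and applying the forward estimate just proved to $(M,\overleftarrow F,d\mu)$ yields the estimate for $B^-_x$. The hard part of the whole scheme is the middle step: one must justify the polar-coordinate representation $d\mu=\hat\sigma_x\,dr\,d\nu_x$ and, above all, the radial identity $\partial_r\log\hat\sigma_x=\Delta r$ — it is here that $\mathrm{Ric}_N$, and through the divergence taken with respect to $d\mu$ the $S$-curvature term, actually enter — and one must check that excising the cut locus $\mathrm{Cut}(x)$, a closed $d\mu$-null set off which $\hat\sigma_x$ has been set to $0$, spoils neither the monotonicity of $A/\mathfrak{s}_k^{N-1}$ nor the identity $\mathrm{vol}^{d\mu}_FB^+_x(\rho)=\int_0^\rho A$. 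Everything else is the classical Bishop--Gromov bookkeeping.
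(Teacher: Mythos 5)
Your proposal is correct, but note that the paper itself offers no proof of this lemma: it is quoted verbatim from Ohta \cite{O1}, so there is no internal argument to measure yours against. What you give is the standard derivation of both assertions from the Laplacian comparison (Lemma 1.1): Bonnet--Myers by playing the blow-up of $(N-1)\sqrt{k}\cot(\sqrt{k}\,t)$ as $t\uparrow\pi/\sqrt{k}$ against the smoothness of $\Delta r$ at the interior point $\gamma(\pi/\sqrt{k})$ of a hypothetical longer minimal geodesic, and Bishop--Gromov by the radial identity $\partial_r\log\hat\sigma_x=\Delta r$, the resulting monotonicity of $\hat\sigma_x/\mathfrak{s}_k^{N-1}$ extended by zero past the cut value, and the elementary ratio lemma. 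The two facts you flag as needing justification --- the polar-coordinate identity $\partial_r\log\hat\sigma_x=\Delta r$ and the harmlessness of excising the ($d\mu$-null) cut locus --- are precisely the facts the paper itself invokes without proof in the proof of Theorem 2.1 when it passes from the Laplacian estimate to $\frac{\partial}{\partial r}\log\sigma_z\leq\frac{\partial}{\partial r}\log\tilde\sigma$, and they are standard (\cite{WX}, \cite{Sh1}); your reduction of the backward-ball case to the forward one via the reverse metric $\overleftarrow{F}$ and $\overleftarrow{\mathrm{Ric}}_N(v)=\mathrm{Ric}_N(-v)$ is likewise the device the paper uses throughout Section 2. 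For the record, Ohta's original argument in \cite{O1} does not route through the Laplacian comparison of \cite{OS}: he controls the Jacobian of the exponential map directly via Jacobi fields and the distortion (a measure-contraction-type estimate), obtaining the diameter bound from the vanishing of the comparison Jacobian at $\pi/\sqrt{k}$. Your route is logically sound since \cite{OS} establishes Lemma 1.1 independently of the present lemma; the only cosmetic caveat is that the stated inequality is meaningful only for $r<R\le\pi/\sqrt{k}$, which your diameter bound makes automatic.
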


\vspace{1mm}

\section {The maximum diam theorem}

We now consider the Finsler manifolds whose diam reach the maximum value. It is shown that they must
be of constant flag curvature and vanishing $S$ curvature. But it seems very difficult to  characterize the
manifolds further since the classification problem has not been solved so far.
There are two main barriers in our proof to overcome. Firstly, Finsler metrics are not reversible
in general. This brings about difficulties in some calculations and arguments. Here the use of the reversibility of $F$
to estimate some inequalities does not work any more. Some quantities should be precisely calculated. For this,
we have to make full use of the forward (resp. backward) geodesic (ball), the forward (resp. backward) distance
function, the reverse Finsler metric and  some corresponding comparison theorems. Secondly, we have to give the
constraint on the volume form $d\mu$, which is satisfied for the Busemann-Hausdorff volume form
if $N=n$. By means of this, we can prove that
for any point $p\in M$, there exists a point $q$ such that $d_F(p,q)$ attains the diam. Thus, by the arbitrariness
of the point $p$, we can further compute  the flag curvature and the $S$ curvature.

Let $(r,\theta)$ be the polar coordinate around $z\in M$ and write the volume form as $d\mu=\sigma_z(r,\theta)drd\theta$.
Set $S_xM:=\{y|y\in T_xM,F(y)=1\}$.  Then we can give

\begin{theorem}
Let $(M,F,d\mu)$ be a complete connected Finsler n-manifold.
If the weighted Ricci curvature and the volume form  satisfy
$\emph{Ric}_{N}\geq (N-1)k>0$, $\lim\limits_{r\to0}\int_{S_zM}\frac{\sigma_z(r,\theta)}{r^{N-1}}d\theta=C,\forall z\in M$
for some real number $C>0,N\in[n,\infty)$, and $Diam M=\frac{\pi}{\sqrt{k}}$,
then the flag curvature $K=k$, $S$ curvature $S=0$,  and $M$ is homeomorphic to the $n$-sphere $\mathbb{S}^{n}$.
\end{theorem}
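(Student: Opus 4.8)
The plan is to push the sharp Bishop--Gromov comparison of Lemma~1.2 to its equality case around \emph{two} poles, to convert this into equality in the Laplacian comparison of Lemma~1.1 along every geodesic, and then to read off $K\equiv k$, $S\equiv0$ and the homeomorphism type from the equality case of the underlying Riccati inequality. Non-reversibility I would absorb by using the reverse metric $\overleftarrow F(x,y):=F(x,-y)$ for all ``incoming'' objects: its Busemann--Hausdorff volume is $d\mu$ again, $d_{\overleftarrow F}(q,\cdot)=d_F(\cdot,q)$, and $\overleftarrow{\mathrm{Ric}}_N(y)=\mathrm{Ric}_N(-y)$, so Lemmas~1.1--1.2 are available for $\overleftarrow F$ as well. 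By Lemma~1.2, $M$ is forward bounded, hence compact (Hopf--Rinow), so $\mathrm{Diam}\,M$ is attained; fix $p,q$ with $d_F(p,q)=\pi/\sqrt k$. If $R_1+R_2<\pi/\sqrt k$ then $B^+_p(R_1)\cap B^-_q(R_2)=\emptyset$ by the triangle inequality. Since any point at maximal distance from $p$ lies in $\mathrm{Cut}(p)$, the set $\{d_F(p,\cdot)=\pi/\sqrt k\}$ is null, so $\mathrm{vol}^{d\mu}_FB^+_p(r)\to\mathrm{vol}(M)$ as $r\uparrow\pi/\sqrt k$; putting this into Lemma~1.2 (and its backward half) gives $\mathrm{vol}^{d\mu}_FB^+_p(r)\ge\mathrm{vol}(M)\,I(r)/I$ and $\mathrm{vol}^{d\mu}_FB^-_q(r)\ge\mathrm{vol}(M)\,I(r)/I$, where $I(r):=\int_0^r(\sin\sqrt kt/\sqrt k)^{N-1}dt$ and $I:=I(\pi/\sqrt k)$. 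Adding these with $R_2=\pi/\sqrt k-R_1$ and using $I(R_1)+(I-I(R_1))=I$ forces equality, so $\mathrm{vol}^{d\mu}_FB^+_p(r)=\mathrm{vol}(M)\,I(r)/I$; differentiating and matching the $r\to0$ asymptotics against the normalisation hypothesis $\lim_{r\to0}\int_{S_zM}\sigma_z(r,\theta)r^{-(N-1)}d\theta=C$ gives $\mathrm{vol}(M)=CI$. Integrating Lemma~1.1 then yields the absolute bound $\mathrm{vol}^{d\mu}_FB^+_z(r)\le CI(r)$ for every $z$; taking $r=\mathrm{rad}(z):=\sup_xd_F(z,x)$, using $\mathrm{vol}^{d\mu}_FB^+_z(\mathrm{rad}(z))=\mathrm{vol}(M)=CI$ and strict monotonicity of $I$, I conclude $\mathrm{rad}(z)=\pi/\sqrt k$ for \emph{every} $z$.

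Fixing now an arbitrary $z$ and a pole $z^*$ and rerunning the volume argument on $(z,z^*)$ gives $\mathrm{vol}^{d\mu}_FB^+_z(r)=CI(r)$. As $r\mapsto\sigma_z(r,\theta)(\sin\sqrt kr/\sqrt k)^{-(N-1)}$ is nonincreasing (Lemma~1.1, reading $\sigma_z$ as extended by zero beyond the cut distance), has limit $c_z(\theta)$ at $r=0$ with $\int_{S_zM}c_z\,d\theta=C$, and has constant $\theta$-integral $C$ for all $r<\pi/\sqrt k$, it must be constant: $\sigma_z(r,\theta)=c_z(\theta)(\sin\sqrt kr/\sqrt k)^{N-1}$ for all $\theta$ and $0<r<\pi/\sqrt k$. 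Hence there are no conjugate points of $z$ before distance $\pi/\sqrt k$; as the cut-distance of $z$ is continuous in the direction and is $\le\mathrm{Diam}=\pi/\sqrt k$, it equals $\pi/\sqrt k$ identically, so $\exp_z$ maps $\{|v|<\pi/\sqrt k\}\subset T_zM$ diffeomorphically onto $M\setminus\mathrm{Cut}(z)$, $\mathrm{Cut}(z)=\exp_z\{|v|=\pi/\sqrt k\}$, and $\Delta r_z=\partial_r\log\sigma_z=(N-1)\sqrt k\cot(\sqrt k r_z)$ off $\{z\}\cup\mathrm{Cut}(z)$, where $r_z:=d_F(z,\cdot)$.

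The crux is now to extract the curvature. Along a unit-speed geodesic $\gamma$ from $z$, the Ohta--Sturm Bochner/Riccati inequality reads $(\Delta r_z)'\le-(\Delta r_z)^2/(N-1)-\mathrm{Ric}_N(\dot\gamma)$, its proof passing through the matrix Riccati equation $U'+U^2+R_{\dot\gamma}=0$ for the shape operator $U$ of the geodesic spheres, the trace inequality $\mathrm{tr}(U^2)\ge(\mathrm{tr}\,U)^2/(n-1)$, the identity $\Delta r_z=\mathrm{tr}\,U-S(\dot\gamma)$ (the $S$-curvature measuring precisely the discrepancy between the $d\mu$-divergence and the $g_{\nabla r_z}$-divergence of $\nabla r_z$), and a Young-type inequality folding the $S$-terms into $\mathrm{Ric}_N$. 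The profile $\Delta r_z(\gamma(t))=(N-1)\sqrt k\cot(\sqrt kt)$ from the previous step makes the Riccati inequality an equality, hence every ingredient is an equality: $\mathrm{Ric}_N(\dot\gamma)\equiv(N-1)k$; $U=\sqrt k\cot(\sqrt k r_z)\,\mathrm{id}$; and the $S$-contributions vanish (automatic for $N=n$, since $S\equiv0$ is already built into $\mathrm{Ric}_n$; in general, equality in the Young step plus continuity of $S$ as $r_z\to0$ forces $S\equiv0$, and in fact $N=n$). Plugging $U=\sqrt k\cot(\sqrt k r_z)\,\mathrm{id}$ back into the matrix Riccati equation gives $R_{\dot\gamma}=k\,\mathrm{id}$ on $\dot\gamma^\perp$, i.e.\ $K(\dot\gamma,\cdot)=k$; applied to the geodesic issuing from $x$ with a prescribed unit initial velocity $u\in T_xM$ and letting the parameter tend to $0$, continuity of the flag curvature gives $K(u,\cdot)=k$. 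Thus $K\equiv k$ and $S\equiv0$ on $TM\setminus0$. I expect this step to be the main obstacle, since it requires unpacking the Finsler Bochner formalism precisely enough to split one scalar equality into the two tensorial conclusions --- and it works only because the first step, which genuinely uses the volume normalisation, supplies the equality along geodesics from \emph{every} base point, not merely one.

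Finally, with $K\equiv k$ each Jacobi field along $\gamma$ vanishing at $t=0$ equals $(\sin\sqrt kt/\sqrt k)\,E(t)$ with $E$ parallel and $g_{\dot\gamma}$-orthogonal to $\dot\gamma$, hence vanishes again at $t=\pi/\sqrt k$; therefore $d\exp_z$ annihilates every direction tangent to the sphere $\{|v|=\pi/\sqrt k\}$, so $\exp_z$ is locally constant there and, that sphere being connected, constant: $\mathrm{Cut}(z)=\{z^*\}$ is a single point. Hence $M=\exp_z\{|v|<\pi/\sqrt k\}\cup\{z^*\}$ is an open $n$-ball with one point adjoined, i.e.\ a compact Hausdorff one-point extension of $\mathbb R^n$, and is therefore homeomorphic to $\mathbb S^n$.
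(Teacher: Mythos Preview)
Your strategy is correct and coincides with the paper's in its skeleton (push Bishop--Gromov to equality, pass to equality in the Laplacian comparison, read off $K\equiv k$ and $S\equiv 0$ from the Riccati equality, then identify the homeomorphism type), but the execution differs in a few instructive places.

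The paper's central device is the distance identity $d_F(p,x)+d_F(x,q)=\pi/\sqrt{k}$, proved by a \emph{three}-ball packing argument; from it the Laplacian of $r_p^+$ is obtained by a two-sided sandwich, comparing $\Delta r_p^+$ with $-\overleftarrow{\Delta}r_q^-$ via the reverse metric $\overleftarrow F$. You bypass this identity: you extract $\mathrm{vol}\,B^+_p(r)=\mathrm{vol}(M)\,I(r)/I$ directly from a two-ball packing plus Lemma~1.2, and then obtain the Laplacian equality from the pointwise monotonicity of $\sigma_z/\tilde\sigma$ together with the normalisation $\int c_z\,d\theta=C$. To show that every $z$ is itself a pole, the paper argues by contradiction (comparing $f(p',\pi/\sqrt k-\varepsilon)$ with $f(p,\pi/\sqrt k-\varepsilon)$), whereas you deduce $\mathrm{vol}(M)=CI$ first and combine it with the absolute bound $\mathrm{vol}\,B_z^+(r)\le CI(r)$; this is a bit cleaner. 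For $N=n$ and $S\equiv 0$, the paper evaluates $S(x,\nabla r)$ from two different base points on the same geodesic and compares the resulting cotangents; your continuity argument as $r\to 0$ is an equally valid shortcut. Finally, the paper derives $\mathrm{Cut}(p)=\{q\}$ from the distance identity (uniqueness of geodesics through interior points), while you use $K\equiv k$ and Jacobi fields to collapse the boundary sphere under $\exp_z$. One wording to tighten: your sentence ``no conjugate points before $\pi/\sqrt k$ \dots\ hence cut-distance $=\pi/\sqrt k$'' is not by itself a valid implication; the real reason is that $\sigma_z(r,\theta)=c_z(\theta)\tilde\sigma(r)>0$ for all $r<\pi/\sqrt k$ under your zero-extension convention, which directly forces the cut distance to be at least $\pi/\sqrt k$.
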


\begin{remark}
Theorem 2.1 means that if $M$ attains its maximum diam, then $N=n$ and thus $\textmd{Ric}_{N}=\textmd{Ric}_{n}=\textmd{Ric}$.
In other words, for any fixed number $N>n$, the diam of $M$ can not achieve $\frac{\pi}{\sqrt{k}}$.
\end{remark}

\begin{proof}
Take $p,q\in M$ such that $d_F(p,q)=\frac{\pi}{\sqrt{k}}$. Let $r_p^+(x)=d_F(p,x)$ be the forward distance function from $p$
and $r_q^-(x)=d_F(x,q)$ be the backward distance function from $q$. For any point $x\in M$, we claim:
\begin{align}\label{1}
d_F(p,x)+d_F(x,q)=d_F(p,q)=\frac{\pi}{\sqrt{k}}.
\end{align}
If not, then there exists a number $\varepsilon>0$ such that $d_F(p,x)+d_F(x,q)=\frac{\pi}{\sqrt{k}}+2\varepsilon.$
Denote by $r_1=d_F(p,x)-\varepsilon$ and $r_2=d_F(x,q)-\varepsilon$. It is clear that $r_1>0,r_2>0$. Then $B_p^+(r_1)$, $B_q^-(r_2)$ and
$B_x^{\pm}(\varepsilon)$ are pairwise disjoint, where $B_p^+(r_1)$ and $B_q^-(r_2)$ are
the forward (resp. backward) geodesic ball centered at $p$ (resp. $q$) of radius $r_1$ (resp. $r_2$) and
$B_x^{\pm}(\varepsilon)=B_x^{+}(\varepsilon)\cap B_x^{-}(\varepsilon)$.
In fact, if $y\in B_p^+(r_1)\cap B_q^-(r_2)$, which means that $d_F(p,y)< r_1$ and $d_F(y,q)< r_2$, then
$$d_F(p,q)\leq d_F(p,y)+d_F(y,q) < r_1+r_2=d_F(p,x)+d_F(x,q)-2\varepsilon=\frac{\pi}{\sqrt{k}}.$$
This is a contradiction. On the other hand, if $y\in B_p^+(r_1)\cap B_x^{\pm}(\varepsilon)$,
which means that $d_F(p,y)<r_1$ and $d_F(y,x)< \varepsilon, d_F(x,y)<\varepsilon $, then
$$ r_1+ \varepsilon=d_F(p,x)\leq d_F(p,y)+d_F(y,x)<r_1+ \varepsilon$$
which is also a contradiction. By a similar argument, we can further conclude that
$B_q^-(r_2)\cap B_x^{\pm}(\varepsilon)=\emptyset$.
Next by volume comparison theorem (Lemma 1.2), we have
\begin{align}\label{2}
1&=\frac{\textmd{vol}_F^{d\mu}(M)}{\textmd{vol}_F^{d\mu}(M)}\geq
\frac{\textmd{vol}_F^{d\mu}B_p^+(r_1)+\textmd{vol}_F^{d\mu}B_q^-(r_2)+
\textmd{vol}_F^{d\mu}(B_x^{\pm}(\varepsilon))}{\textmd{vol}_F^{d\mu}(M)}\nonumber\\
&\geq\frac{\int_0^{r_1}(\frac{\sin \sqrt{k}t}{\sqrt{k}})^{N-1}dt+
\int_0^{r_2}(\frac{\sin \sqrt{k}t}{\sqrt{k}})^{N-1}dt}{\int_0^{\frac{\pi}{\sqrt{k}}}(\frac{\sin \sqrt{k}t}{\sqrt{k}})^{N-1}dt}
+\frac{\textmd{vol}_F^{d\mu}(B_x^{\pm}(\varepsilon))}{\textmd{vol}_F^{d\mu}(M)}.
\end{align}
Note that $r_1+r_2=\frac{\pi}{\sqrt{k}}$. Hence,
$$\int_0^{r_1}\left(\frac{\sin \sqrt{k}t}{\sqrt{k}}\right)^{N-1}dt
=\int_{r_2}^{\frac{\pi}{\sqrt{k}}}\left(\frac{\sin \sqrt{k}t}{\sqrt{k}}\right)^{N-1}dt.$$
Therefore, (\ref{2}) can be rewritten as follows
$$1\geq1+\frac{\textmd{vol}_F^{d\mu}(B_x^{\pm}(\varepsilon))}{\textmd{vol}_F^{d\mu}(M)}$$
which implies that $\varepsilon=0$. This contradicts the assumption above that $\varepsilon>0$.

From (\ref{1}) we obtain
$$r_p^+(x)+r_q^-(x)=\frac{\pi}{\sqrt{k}},$$
which gives
$$\Delta r_p^+(x)=\Delta\left(\frac{\pi}{\sqrt{k}}-r_q^-(x)\right)=\Delta(-r_q^-(x))=-\overleftarrow{\Delta}r_q^-(x),$$
where $\overleftarrow{\Delta}$ denotes the Laplacian of Finsler metric $\overleftarrow{F}(x,y):=F(x,-y)$.
Further, for the reverse Finsler metric $\overleftarrow{F}$,
$\overleftarrow{d}_{\overleftarrow{F}}(p,q)=d_{F}(q,p)$, $\overleftarrow{\nabla} u=-\nabla(-u)$
and $\overleftarrow{\textmd{Ric}}_N(x,y)=\textmd{Ric}_N(x,-y)$. Since $\textmd{Ric}_N(x,y)\geq(N-1)k,\forall y\in T_xM$, then
$\overleftarrow{\textmd{Ric}}_N(x,y)\geq(N-1)k,\forall y\in T_xM$. Thus by the Laplacian comparison theorem
(Lemma 1.1)of the revised version for the reverse Finsler metric $\overleftarrow{F}$, we have
\begin{align}
(N-1)\sqrt{k}\cot(\sqrt{k}r_p^+(x))&\geq\Delta r_p^+(x)= -\overleftarrow{\Delta}(r_q^-(x))\nonumber\\
&\geq-(N-1)\sqrt{k}\cot(\sqrt{k}r_q^-(x))\nonumber\\
&=(N-1)\sqrt{k}\cot(\sqrt{k}r_p^+(x))\nonumber
\end{align}
which yields
\begin{align}\label{a0}
\Delta r_p^+(x)=(N-1)\sqrt{k}\cot(\sqrt{k}r_p^+(x)).
\end{align}
In the following, we write $r$ instead of $r_p^+(x)$ for simplicity. Direct computation gives
\begin{align}\label{3}
\frac{\partial}{\partial r}(\Delta r)+\frac{(\Delta r)^2}{N-1}=-(N-1)k.
\end{align}
Let $S_{p}(r(x))$ be the forward geodesic sphere of radius
$r(x)$ centered at $p$. Choosing the local $g_{\nabla r}$-orthonormal frame
$E_{1},\cdots,E_{n-1}$ of $S_{p}(r(x))$ near $x$, we get local vector fields
$E_{1},\cdots,E_{n-1},E_{n}=\nabla r$ by parallel transport
along geodesic rays. Thus, it follows from \cite{WX} that
\begin{align}\label{4}
\frac{\partial}{\partial r}\textmd{tr}_{\nabla r}H(r)=-\textmd{Ric}(\nabla r)-\sum_{i,j}[H(r)(E_i,E_j)]^2,
\end{align}
where $H(r)$ is the Hessian of distant function $r$.
On the other hand, one has  (\cite{WX}),
\begin{align}\label{5}
\Delta r=\textmd{tr}_{\nabla r}H(r)-S(\nabla r).
\end{align}
Therefore, from (\ref{3})-(\ref{5}), we derive
\begin{align}\label{6}
-(N-1)k&=\frac{\partial}{\partial r}(\Delta r)+\frac{(\Delta r)^2}{N-1}\nonumber\\
&=\frac{\partial}{\partial r}(\textmd{tr}_{\nabla r}H(r)-S(\nabla r))+\frac{1}{N-1}(\textmd{tr}_{\nabla r}H(r)-S(\nabla r))^2\nonumber\\
&\leq\frac{\partial}{\partial r}\textmd{tr}_{\nabla r}H(r)-\dot{S}(\nabla r)
+\frac{1}{n-1}(\textmd{tr}_{\nabla r}H(r))^2+\frac{S(\nabla r)^2}{N-n}\nonumber\\
&\leq\frac{\partial}{\partial r}\textmd{tr}_{\nabla r}H(r)-\dot{S}(\nabla r)+\sum_{i,j}[H(r)(E_i,E_j)]^2+\frac{S(\nabla r)^2}{N-n}\nonumber\\
&=-\textmd{Ric}(\nabla r)-\dot{S}(\nabla r)+\frac{S(\nabla r)^2}{N-n}\nonumber\\
&=-\textmd{Ric}_N(\nabla r)\leq-(N-1)k,
\end{align}
where the first inequality holds from the following by replacing $a=\textmd{tr}_{\nabla r}H(r),b=S(\nabla r)$:
\begin{align}\label{7}
\frac{(a-b)^2}{N-1}&=\frac{a^2}{n-1}+\frac{b^2}{N-n}-\frac{N-n}{(n-1)(N-1)}(a+\frac{n-1}{N-n}b)^2\nonumber\\
&\leq\frac{a^2}{n-1}+\frac{b^2}{N-n}.
\end{align}
Using (\ref{6}) and (\ref{7}), we obtain
\begin{align}
\left\{
\begin{array}{ll}
   &\frac{\textmd{tr}_{\nabla r}H(r)}{n-1}=\frac{-S(\nabla r)}{N-n}=\frac{\Delta r}{N-1},\\
   &\\
    & \sum_{i,j}[H(r)(E_i,E_j)]^2=\frac{1}{n-1}(\textmd{tr}_{\nabla r}H(r))^2.\nonumber
\end{array}\right.
\end{align}
Thus,
\begin{align}\label{8}
\nabla^2r(E_i,E_j)&:=H(r)(E_i,E_j)\nonumber\\
&=\left\{
\begin{array}{ll}
   \frac{\textmd{tr}_{\nabla r}H(r)}{n-1}=\frac{-S(\nabla r)}{N-n}=\frac{\Delta r}{N-1}=\sqrt{k}\cot(\sqrt{k}r),&i=j<n \\
    0,&i\neq j.
\end{array}\right.
\end{align}
Now we calculate the flag curvature of $(M,F)$. By (\ref{8}) we observe that $\{E_i\}_{i=1}^{n-1}$ are $(n-1)$ eigenvectors of
$\nabla^2r$. That is,
$$D^{\nabla r}_{E_i}\nabla r=\sqrt{k}\cot(\sqrt{k}r)E_i,\qquad i=1,\cdots,n-1.$$
Since $\nabla r$ is a geodesic field on $(M,F)$, the flag curvature $K(\nabla r;\cdot)$ is equal to the sectional
curvature of the weighted Riemannian manifold  $(M,g_{\nabla r})$.
Note that $\{E_i\}_{i=1}^{n-1}$ are $(n-1)$ eigenvectors of $\nabla^2r$  and parallel along  the geodesic ray.
By a straightforward computation, we get, for $1\leq i\leq n-1$,
\begin{align}
K(\nabla r;E_i)&=R^{\nabla r}(E_i,\nabla r,E_i,\nabla r)=g_{\nabla r}(R^{\nabla r}(E_i,\nabla r)\nabla r,E_i)\nonumber\\
&=g_{\nabla r}(D^{\nabla r}_{E_i}D^{\nabla r}_{\nabla r}\nabla r-D^{\nabla r}_{\nabla r}D^{\nabla r}_{E_i}\nabla r
-D^{\nabla r}_{[E_i,\nabla r]}\nabla r,E_i)\nonumber\\
&=-g_{\nabla r}(D^{\nabla r}_{\nabla r}(\sqrt{k}\cot(\sqrt{k}r))E_i+
D^{\nabla r}_{D^{\nabla r}_{E_i}\nabla r-D^{\nabla r}_{\nabla r}E_i}\nabla r,E_i)\nonumber\\
&=-g_{\nabla r}(-k\csc^2(\sqrt{k}r)E_i+D^{\nabla r}_{\sqrt{k}\cot(\sqrt{k}r)E_i}\nabla r,E_i)\nonumber\\
&=k\csc^2(\sqrt{k}r)-\sqrt{k}\cot(\sqrt{k}r)g_{\nabla r}(D^{\nabla r}_{E_i}\nabla r,E_i)\nonumber\\
&=k\csc^2(\sqrt{k}r)-k\cot^2(\sqrt{k}r)\nonumber\\
&=k.\nonumber
\end{align}

We have proved that for any $x\in M$, $K(x,\nabla r;\cdot)=k$, where $r$ is the distance function from $p$. Next we will
prove that along any direction $V\in T_xM$, $K(x,V;\cdot)=k$, which yields $K\equiv k$. For this, we will vindicate that
for any fixed point $ p'\in M$ there exists a point $q'\in M$ such that $d_F(p',q')=\frac{\pi}{\sqrt{k}}$.
If this is not true, then there is a small $\varepsilon>0$ such that $B^+_{p'}(\frac{\pi}{\sqrt{k}}-\varepsilon)=M$. Define
$$f(x,r):=\frac{\textmd{vol}_F^{d\mu}(B^+_{x}(r))}{\int_0^r(\frac{\sin\sqrt{k}t}{\sqrt{k}})^{N-1}dt}.$$
Then
\begin{align}\label{9}
 f(p',\frac{\pi}{\sqrt{k}}-\varepsilon)&=\frac{\textmd{vol}_F^{d\mu}(B^+_{p'}(\frac{\pi}{\sqrt{k}}-\varepsilon))}
 {\int_0^{\frac{\pi}{\sqrt{k}}-\varepsilon}(\frac{\sin\sqrt{k}t}{\sqrt{k}})^{N-1}dt}
= \frac{\textmd{vol}_F^{d\mu}(M)}{\int_0^{\frac{\pi}{\sqrt{k}}-\varepsilon}(\frac{\sin\sqrt{k}t}{\sqrt{k}})^{N-1}dt}\nonumber\\
 &=\frac{\textmd{vol}_F^{d\mu}(B^+_{p}(\frac{\pi}{\sqrt{k}}-\varepsilon))+\textmd{vol}_F^{d\mu}(B^-_{q}(\varepsilon))}
 {\int_0^{\frac{\pi}{\sqrt{k}}-\varepsilon}(\frac{\sin\sqrt{k}t}{\sqrt{k}})^{N-1}dt}\nonumber\\
 &>\frac{\textmd{vol}_F^{d\mu}(B^+_{p}(\frac{\pi}{\sqrt{k}}-\varepsilon))}
 {\int_0^{\frac{\pi}{\sqrt{k}}-\varepsilon}(\frac{\sin\sqrt{k}t}{\sqrt{k}})^{N-1}dt}
 = f(p,\frac{\pi}{\sqrt{k}}-\varepsilon),
 \end{align}
where the third equality is due to (\ref{1}).
Since $\textmd{Ric}_N\geq (N-1)k$, by Laplacian comparison theorem (Lemma 1.1), we have
\begin{align}\label{a1}
\Delta r\leq(N-1)\sqrt{k}\cot(\sqrt{k}r)=(N-1)\frac{(\sin\sqrt{k}r)'}{\sin\sqrt{k}r},
 \end{align}
where $r$ is the distance function from any fixed point  $z\in M$.
Let $(r,\theta)$ be the polar coordinates of $x$. Then
$r(x)=F(v),\theta^{\alpha}(x)=\theta^{\alpha}(\frac{v}{F(v)})$ and $v=\exp^{-1}_{z}(x)$.
Thus, the above inequality shows
\begin{align}\label{a2}
\frac{\partial}{\partial r}\log\sigma_z\leq\frac{\partial}{\partial r}\log\tilde{\sigma},
\qquad \tilde{\sigma}:=\left(\frac{\sin\sqrt{k}r}{\sqrt{k}}\right)^{N-1}.
 \end{align}
Integrating both sides gives
\begin{align}\label{a3}
\frac{\sigma_z(r,\theta)}{\tilde{\sigma}(r)}\leq\frac{\sigma_z(\delta,\theta)}{\tilde{\sigma}(\delta)}(\delta\to0).
 \end{align}
Note that if $r$ is the distance function from $p$, then, by (\ref{a0}), the equalities holds in (\ref{a1})-(\ref{a3}).
 Hence, using the condition on $d\mu$ of Theorem 2.1, we have
\begin{align}
 f(p,\frac{\pi}{\sqrt{k}}-\varepsilon)
 &= \lim_{r\to0}\frac{\textmd{vol}_F^{d\mu}(B^+_{p}(r))}
 {\int_0^{r}\tilde{\sigma}dt}
 =\lim\limits_{r\to0}\int_{S_pM}\frac{\sigma_p(r,\theta)}{r^{N-1}}d\theta
 =C,\nonumber\\
 f(p',\frac{\pi}{\sqrt{k}}-\varepsilon)
 &\leq \lim_{r\to0}\frac{\textmd{vol}_F^{d\mu}(B^+_{p'}(r))}
 {\int_0^{r}\tilde{\sigma}dt}
 =\lim\limits_{r\to0}\int_{S_{p'}M}\frac{\sigma_{p'}(r,\theta)}{r^{N-1}}d\theta
 =C,\nonumber
 \end{align}
which contradicts to (\ref{9}).

In what follows, we prove that along any direction $V\in T_xM$, $K(x,V;\cdot)=k,\forall x\in M$.
First, we draw  a minimal geodesic $\overleftarrow{\eta}$ of reverse Finsler metric
$\overleftarrow{F}$  satisfying $\overleftarrow{\eta}(0)=x,\overleftarrow{\eta}'(0)=\frac{-V}{F(V)}$.
Then its reverse $\eta$ is a normal minimal geodesic of $F$ satisfying $\eta(0)=x,\dot{\eta}(0)=\frac{V}{F(V)}$.
Choose $p'= \overleftarrow{\eta}(\delta)$ for some small $\delta>0$. Then $d_F(p',x)=L(\eta_{\widehat{p'x}})=\delta$.
Second, let $q'$ be the point such that $d_F(p',q')=\frac{\pi}{\sqrt{k}}$ and draw a minimal geodesic
$\tilde{\gamma}$ from $x$ to $q'$. Then, by (\ref{1}), we see $\eta|_{\widehat{p'x}}\cup\tilde{\gamma}$ is a minimal geodesic from $p'$
to $q'$. Thus, by the same argument, we obtain $K(x,V;\cdot)=k$.

Now we are to prove $(M,F,d\mu)$ has vanishing $S$ curvature. From (\ref{8}) we have
$$S(x,\nabla r)=-(N-n)\sqrt{k}\cot(\sqrt{k}r),$$
where $r(x)=d_F(p,x)$. Choose an arbitrary point $p'$ on the minimal geodesic $\gamma$ from $p$ to
$x$ and let $q'$ be the point such that $d_F(p',q')=\frac{\pi}{\sqrt{k}}$. Then $\gamma|_{p'x}$ is
the minimal geodesic from $p'$ to $x$ and we can extend $\gamma$ to pass through $q'$.
Write $r_1:=d_F(p,x),r_2:=d_F(p',x)$. Then $\nabla r_1=\nabla r_2$ and
$$-(N-n)\sqrt{k}\cot(\sqrt{k}r_1)=S(x,\nabla r_1)=S(x,\nabla r_2)=-(N-n)\sqrt{k}\cot(\sqrt{k}r_2).$$
Since $r_1\neq r_2$, we have $N=n$ which yields $S(x,\nabla r)=0$. By the arbitrariness of choice of
the points $p,p'$ and $x$, for any vector $V\in T_xM$, we can choose a suitable geodesic $\gamma$ passing through
$x$ and satisfying $\nabla r(x)=\frac{V}{F(V)}$. This gives $S(x,V)=0,\forall x\in M,\forall V\in T_xM$.

Let $p,q\in M$ be the points as above. Then $d_F(p,q)=\frac{\pi}{\sqrt{k}}$. It follows from (\ref{1}) that, for any
point $x$, there exists a minimal geodesic $\gamma$ from $p$ to $q$ passing through $x$.  This means that $x$ is not
cut point of $p$. If not, there are two minimal geodesics $\eta_1$ and $\eta_2$ from $p$ to $x$. Then
$\eta_1\cup\gamma_{\widehat{xq}}$ and $\eta_2\cup\gamma_{\widehat{xq}}$ are two minimal geodesics from $p$ to $q$ passing through $x$.
This is  impossible. Therefore, by arbitrariness of choice of point $x$, we conclude $q=Cut(p)$.
Thus
$$\exp_{p}:T_{p}M\supset \textbf{B}_{p}(\frac{\pi}{\sqrt{k}})\longrightarrow M^{n}\backslash\{q\}$$
 is a diffeomorphism. On the other hand,
 $$\exp_{\tilde{p}}:T_{\tilde{p}}\mathbb{S}^{n}\supset \textbf{B}_{\tilde{p}}(\pi)\longrightarrow \mathbb{S}^{n}\backslash\{\tilde{q}\}$$
is also a diffeomorphism, where $\mathbb{S}^{n}$ is the $n$-sphere, $\tilde{p},\tilde{q}$ are the south pole and north pole respectively. Let $(\tilde{r},\tilde{\theta}^{\alpha})$ be the polar coordinate system of $T_{\tilde{p}}\mathbb{S}^{n}$ and $(r,\theta^{\alpha})$
be  the polar coordinate system of $T_{p}M$. Define $h:T_{\tilde{p}}\mathbb{S}^{n}\longrightarrow T_{p}M$ by $r=\frac{\tilde{r}}{\sqrt{k}},\theta^{\alpha}=\tilde{\theta}^{\alpha}$. Then $h$ is a diffeomorphism.
Now we define $\psi:M^{n}\longrightarrow \mathbb{S}^{n}$ by
$$\psi(x)=\left\{\begin{array}{cc}
            \exp_{\tilde{p}}\circ h^{-1}\circ \exp_{p}^{-1}(x) & x\neq q \\
            \tilde{q} & x=q
          \end{array}\right.$$
Obviously, $\psi$ is homeomorphic. That is, $M$ is homeomorphic to $\mathbb{S}^{n}$.

\end{proof}

In \cite{KY}, the authors obtained the maximum diam theorem for the reversible Finsler manifolds
by using the condition of Ricci curvature $\textmd{Ric} \geq(n-1)k>0$ and vanishing $S$ curvature.
By the weighted Ricci curvature defined in Section 1 above, the condition can
 be also written as $\textmd{Ric}_n\geq(n-1)k>0$ (see Theorem A).
Note that a reversible Finsler sphere is actually the Euclidean sphere (see Remark 0.1 above).
Then, from Theorem 2.1, we  generalize  Theorem A as follows.

\begin{corollary}
Let $(M,F,d\mu)$ be a complete connected Finsler n-manifold with the Beausemann-hausdorff volume form.
If the weighted Ricci curvature satisfies $\emph{Ric}_n\geq(n-1)k>0$ and  $Diam(M)=\frac{\pi}{\sqrt{k}}$,
then $(M,F)$ is isometric to a standard Finsler sphere.
\end{corollary}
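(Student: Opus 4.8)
The plan is to obtain Corollary 2.3 directly from Theorem 2.1 together with the definition of a standard Finsler sphere given in the Introduction. The curvature bound $\mathrm{Ric}_n\geq(n-1)k>0$ and the diameter condition $\mathrm{Diam}(M)=\pi/\sqrt k$ are exactly the hypotheses of Theorem 2.1 in the case $N=n$, so the only thing left to verify is the normalization condition on the volume form required there, namely that
$$\lim_{r\to0}\int_{S_zM}\frac{\sigma_z(r,\theta)}{r^{n-1}}\,d\theta=C$$
for some constant $C>0$ that is the same for every $z\in M$.

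To check this for the Busemann--Hausdorff volume form $d\mu$ I would use its defining property: at each point $x$ the $F$-unit ball of $T_xM$ has the same Lebesgue volume (in linear coordinates) as the Euclidean unit ball, so infinitesimal forward balls are Euclidean to leading order, $\mathrm{vol}_F^{d\mu}(B_z^+(r))=\omega_n r^n+o(r^n)$ as $r\to0$ for every $z$, where $\omega_n$ is the volume of the Euclidean unit $n$-ball. (This is seen by freezing the metric at $z$, where the identity holds exactly, as for a Minkowski norm, and noting that the Jacobian of $\exp_z$ and the variation of $F$ near $z$ contribute only $O(r)$ corrections.) Writing $d\mu=\sigma_z(r,\theta)\,dr\,d\theta$ in polar coordinates around $z$ and using that $\sigma_z(r,\theta)/(\sin(\sqrt k\,r)/\sqrt k)^{n-1}$ is non-increasing in $r$ --- the inequality already established in the proof of Theorem 2.1 --- this forces $\int_{S_zM}\sigma_z(r,\theta)\,d\theta=n\omega_n\,r^{n-1}+o(r^{n-1})$, hence the displayed limit holds with $C=n\omega_n$, manifestly independent of $z$. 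Therefore Theorem 2.1 applies with $N=n$ and yields $K\equiv k$, $S\equiv0$, and $M$ homeomorphic to $\mathbb S^n$.

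Finally, a Finsler metric $F$ with volume form $d\mu$ on a manifold homeomorphic to $\mathbb S^n$ which has constant flag curvature $k$, vanishing $S$-curvature and $\mathrm{Diam}=\pi/\sqrt k$ is, by the definition recalled in the Introduction, a standard Finsler sphere $(\mathfrak S^n(1/\sqrt k),F,d\mu)$; hence $(M,F)$ is itself a standard Finsler sphere, in particular isometric to one, and the corollary follows. The step I expect to be the main obstacle is the middle one: one must establish both that the angular-density limit exists and, crucially, that its value is independent of the base point $z$. This independence is precisely where the special normalization of the Busemann--Hausdorff measure is used; as Remark 2.2 makes clear, for a general volume form the limiting constant would depend on $z$, and the reduction to Theorem 2.1 would break down.
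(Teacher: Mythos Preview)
Your proposal is correct and follows essentially the same route as the paper: verify that the Busemann--Hausdorff volume form satisfies the normalization hypothesis of Theorem~2.1 with $N=n$ (the paper simply cites the known fact $\lim_{r\to0}\mathrm{vol}_F^{d\mu}(B_x^+(r))/\mathrm{vol}\,\mathbb{B}^n(r)=1$ from \cite{Sh2}, while you spell out the passage from ball asymptotics to the angular-density limit via the monotonicity of $\sigma_z/\tilde\sigma$), then read off $K=k$, $S=0$, and the homeomorphism to $\mathbb{S}^n$, and conclude by the definition of a standard Finsler sphere. One minor point: your closing appeal to Remark~2.2 is slightly misplaced, since that remark concerns the forced equality $N=n$ rather than the base-point independence of the limiting constant; but this does not affect your argument.
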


\begin{proof}
Note that the Busemann-Hausdorff volume form satisfies
$\lim\limits_{r\to0}\frac{\textmd{vol}^{d\mu}_F(B^+_x(r))}{\textmd{vol}\mathbb{B}^n(r)}=1,\forall x\in M$,
where $B_x^+(r)$ is the forward geodesic ball of $M$ and  $\mathbb{B}^n(r)$ is the Euclidean ball (\cite{Sh2}).
In this case, the condition on $d\mu$ in Theorem 2.1 is satisfied for $N=n$.
From Theorem 2.1,we have $K=k$ and $M$ is homeomorphic to $\mathbb{S}^n$. Thus we can view $(M,F,d\mu)$ as a
sphere  with constant flag curvature and vanishing $S$ curvature. Hence, it is a standard Finsler sphere.
\end{proof}

As is well known that there are infinite nonreversible Finsler metrics with constant flag curvature on the sphere
 $\mathbb{S}^n$. Since these metrics have not been classified completely, we can not characterize the manifolds
 when the diam attains its maximum. However, the following example shows that the maximum diam can be achieved
 in non-Riemannian case.
\begin{example} \cite{BS}
View $\mathbb{S}^3$ as a compact Lie group. Let $\zeta^1,\zeta^2,\zeta^3$ be the standard right invariant 1-form
on $\mathbb{S}^3$ satisfying
$$d\zeta^1=2\zeta^2\wedge\zeta^3,\quad d\zeta^2=2\zeta^3\wedge\zeta^1,\quad d\zeta^3=2\zeta^1\wedge\zeta^2.$$
For $k\geq1$, define
$$\alpha_k(y)=\sqrt{(k\zeta^1(y))^2+k(\zeta^2(y))^2+k(\zeta^3(y))^2},\quad \beta_k(y)=\sqrt{k^2-k}\zeta^1(y).$$
Then $F_k=\alpha_k+\beta_k$ is a Randers metric on $\mathbb{S}^3$ satisfying
$$K\equiv1,\quad S\equiv0,\quad Diam(\mathbb{S}^3,F_k)=\pi.$$
\end{example}

\vspace{3mm}
In what follows, we focus on the Randers spaces.
Let $\mathfrak{g}$ be the standard sphere metric and $W=W^i\frac{\partial}{\partial x^i}$ be a
Killing vector field on $\mathbb{S}^n(\frac{1}{\sqrt{k}})$. Then the sectional
curvature $K_{\mathfrak{g}}=k$.  Define a Randers metric by
\begin{align}\label{b}
F=\frac{\sqrt{\lambda \mathfrak{g}^2+W_0^2}}{\lambda}-\frac{W_0}{\lambda},
\end{align}
where  $\lambda=1-\|W\|^2_{\mathfrak{g}}$. Then the sphere $\mathbb{S}^n(\frac{1}{\sqrt{k}})$ is
equipped with a Randers metric $F$ of constant flag  curvature $k$ (see \cite{BCS},\cite{BRS} for details).
We say it a \emph{standard Randers sphere} and denote it by $\mathcal{S}^n(\frac{1}{\sqrt{k}})$.

\begin{proposition}
On a standard Randers sphere $(\mathcal{S}^n(\frac{1}{\sqrt{k}}),F,d\mu)$ with the Busemann-Hausdorff volume form, we have
\begin{enumerate}
  \item $S=0$;
  \item $\emph{vol}^{d\mu}_F(\mathcal{S}^n(\frac{1}{\sqrt{k}}))=\emph{vol}_\mathfrak{g}(\mathbb{S}^n(\frac{1}{\sqrt{k}}))$;
  \item $Diam(\mathcal{S}^n(\frac{1}{\sqrt{k}}),F)=\frac{\pi}{\sqrt{k}}$.
\end{enumerate}
Clearly,  $(\mathcal{S}^n(\frac{1}{\sqrt{k}}),F,d\mu)$ is naturally a standard Finsler sphere.
\end{proposition}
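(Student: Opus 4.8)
The plan is to read all three statements off the Zermelo navigation data $(\mathfrak{g},W)$ behind (\ref{b}). First I would put (\ref{b}) into the usual Randers form $F=\alpha+\beta$ with $\alpha=\sqrt{a_{ij}y^{i}y^{j}}$ and $\beta=b_{i}y^{i}$; squaring (\ref{b}) yields
\[
a_{ij}=\frac{\lambda\,\mathfrak{g}_{ij}+W_{i}W_{j}}{\lambda^{2}},\qquad b_{i}=-\frac{W_{i}}{\lambda},\qquad W_{i}:=\mathfrak{g}_{ij}W^{j},\quad W_{0}=W_{i}y^{i}.
\]
Then, using the Sherman--Morrison formula for $a^{ij}$, the matrix determinant lemma, and $\lambda+\|W\|_{\mathfrak{g}}^{2}=1$, I would record the two identities
\[
\|\beta\|_{\alpha}^{2}=\|W\|_{\mathfrak{g}}^{2}=1-\lambda,\qquad \det(a_{ij})=\lambda^{-(n+1)}\det(\mathfrak{g}_{ij}),
\]
which will be the only computations needed below.

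For (1): since $W$ is a Killing field of $\mathfrak{g}$, i.e.\ $\mathcal{L}_{W}\mathfrak{g}=0$, the navigation formula for the $S$-curvature of a Randers metric with respect to the Busemann--Hausdorff measure (cf.\ \cite{BRS}) gives $S\equiv0$. For (2): substituting the two identities above into the Randers volume formula $d\mu_{BH}=(1-\|\beta\|_{\alpha}^{2})^{(n+1)/2}\sqrt{\det(a_{ij})}\,dx$, the powers of $\lambda$ cancel and $d\mu_{BH}=\sqrt{\det(\mathfrak{g}_{ij})}\,dx=d\mu_{\mathfrak{g}}$, the Riemannian volume form of $\mathbb{S}^{n}(\frac{1}{\sqrt{k}})$; integrating over $\mathbb{S}^{n}$ then gives $\mathrm{vol}_{F}^{d\mu}(\mathcal{S}^{n}(\frac{1}{\sqrt{k}}))=\mathrm{vol}_{\mathfrak{g}}(\mathbb{S}^{n}(\frac{1}{\sqrt{k}}))$.

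Part (3) is the substantive point, and I would treat it through the ``wind'' interpretation of Zermelo navigation. Since $\mathbb{S}^{n}$ is compact, the flow $\{\varphi_{t}\}_{t\in\mathbb{R}}$ of $W$ is complete, and because $W$ is Killing each $\varphi_{t}$ is a $\mathfrak{g}$-isometry (while $(d\varphi_{t})W=W\circ\varphi_{t}$ holds always). From (\ref{b}) one checks that $F(v)\le1$ holds exactly when $\|v-W\|_{\mathfrak{g}}\le1$, so for a curve $\gamma:[0,T]\to\mathbb{S}^{n}$ the condition $F(\gamma')\le1$ along $\gamma$ reads $\|\gamma'-W(\gamma)\|_{\mathfrak{g}}\le1$; setting $\tilde\gamma(t):=\varphi_{-t}(\gamma(t))$ one computes $\tilde\gamma'(t)=(d\varphi_{-t})\big(\gamma'(t)-W(\gamma(t))\big)$, hence $\|\tilde\gamma'\|_{\mathfrak{g}}\le1$, and the construction is reversible. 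This identifies $F$-length with travel time in the current $W$ and yields
\[
d_{F}(p,q)=\min\{\,T\ge0:\ d_{\mathfrak{g}}(\varphi_{T}(p),q)\le T\,\},
\]
the minimum being attained since $d_{\mathfrak{g}}$ is continuous and $\mathrm{Diam}_{\mathfrak{g}}(\mathbb{S}^{n}(\frac{1}{\sqrt{k}}))=\frac{\pi}{\sqrt{k}}<\infty$. Taking $T=\frac{\pi}{\sqrt{k}}$ gives $d_{F}(p,q)\le\frac{\pi}{\sqrt{k}}$ for all $p,q$. For the reverse bound write $D:=\frac{\pi}{\sqrt{k}}$ and $c_{W}:=\sup_{\mathbb{S}^{n}}\|W\|_{\mathfrak{g}}$, which is $<1$ by compactness and $\lambda>0$, so that $d_{\mathfrak{g}}(p,\varphi_{s}(p))\le|s|\,c_{W}$; then, fixing $p$, letting $\bar p$ be its $\mathfrak{g}$-antipode and putting $q:=\varphi_{D}(\bar p)$, the fact that $\varphi_{-D}$ is an isometry together with the triangle inequality gives, for every $0\le T<D$,
\[
d_{\mathfrak{g}}(\varphi_{T}(p),q)=d_{\mathfrak{g}}(\varphi_{T-D}(p),\bar p)\ge D-d_{\mathfrak{g}}(p,\varphi_{T-D}(p))\ge D-(D-T)c_{W}>T,
\]
since $D-(D-T)c_{W}-T=(D-T)(1-c_{W})>0$. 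Hence $d_{F}(p,q)=D$, so $\mathrm{Diam}(\mathcal{S}^{n}(\frac{1}{\sqrt{k}}),F)=\frac{\pi}{\sqrt{k}}$. As $F$ has constant flag curvature $k$ by construction (\cite{BCS},\cite{BRS}), items (1) and (3) then show $(\mathcal{S}^{n}(\frac{1}{\sqrt{k}}),F,d\mu)$ is a standard Finsler sphere in the sense defined above.

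The step I expect to be the main obstacle is making the navigation distance formula rigorous: pinning down the precise correspondence between $F$-rectifiable curves and time-parametrized trajectories in the current $W$, the lower semicontinuity that guarantees the minimum is attained, and --- because $F$ is irreversible, so $d_{F}(p,q)\neq d_{F}(q,p)$ in general --- keeping careful track of the direction of travel and of forward versus backward geodesic balls throughout. Everything else reduces to routine bookkeeping with the two algebraic identities recorded at the outset.
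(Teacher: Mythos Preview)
Your argument is correct. Parts (1) and (2) coincide with the paper's proof; you simply spell out the algebraic identities behind $dV_{F}=dV_{\mathfrak g}$ rather than quoting the known formula.

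Part (3), however, is a genuinely different route. The paper argues as follows: Bonnet--Myers gives the upper bound $\mathrm{Diam}\le\pi/\sqrt{k}$; then, since $K\equiv k$, a result of Shen produces, for each $p$, a point $q$ with $\exp_p(\tfrac{\pi}{\sqrt{k}}\xi)=q$ for every unit $\xi$; finally the volume comparison inequality $\mathrm{vol}^{d\mu}_{F}(B^{+}_{p}(r))\le\sigma_{n}(r)$ (equality iff $\mathbf i_{p}\ge r$), combined with the volume identity from (2), forces $\mathbf i_{p}\ge\pi/\sqrt{k}$ and hence $d_{F}(p,q)=\pi/\sqrt{k}$. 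Your approach instead exploits the navigation description directly: the correspondence $\gamma\mapsto\tilde\gamma=\varphi_{-t}\circ\gamma$ between $F$-unit-speed curves and $\mathfrak g$-unit-speed curves yields the closed formula $d_{F}(p,q)=\min\{T:\ d_{\mathfrak g}(\varphi_{T}(p),q)\le T\}$, from which both the upper bound and the explicit antipodal pair realising it follow by elementary triangle-inequality estimates using $c_{W}<1$. What this buys you is that the diameter computation is decoupled from the curvature: you never invoke $K\equiv k$, the volume comparison theorem, or even part (2). The paper's argument, by contrast, ties (3) to (2) and to the constant-flag-curvature machinery, which is more in the spirit of the surrounding comparison-geometry results but is less self-contained for this particular proposition.
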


\begin{proof} In (\ref{b}), $W$ is a Killing vector field. This is equivalent to $S=0$ (see \cite{BCS}).
Since $d\mu$ is the Busemann-Hausdorff volume form, we know  that $dV_F = dV_\mathfrak{g}$. Thus,
$$\textmd{vol}^{d\mu}_F(\mathcal{S}^n(\frac{1}{\sqrt{k}}))=\textmd{vol}_\mathfrak{g}(\mathbb{S}^n(\frac{1}{\sqrt{k}}))$$
Now fix $p\in \mathcal{S}^n(\frac{1}{\sqrt{k}})$. Using $K=k$ and Theorem 18.3.1 in \cite{Sh1}, there exists
$q\in \mathcal{S}^n(\frac{1}{\sqrt{k}})$ such that
$$\exp_p(\frac{\pi}{\sqrt{k}}\xi)=q,\qquad \forall\xi\in S_p(\mathcal{S}^n(\frac{1}{\sqrt{k}})),$$
where $S_p(\mathcal{S}^n(\frac{1}{\sqrt{k}})):=\{v|v\in T_p(\mathcal{S}^n(\frac{1}{\sqrt{k}})),F(v)=1\}$.
From the proof of the volume comparison
theorem (\cite{Sh2}, or Theorem 16.1.1, p.250, \cite{Sh1}),
$$\textmd{vol}^{d\mu}_F(B^+_p(r))\leq\sigma_n(r),$$
where $\sigma_n(r)$ denotes the volume of the metric ball of radius $r$ in $\mathbb{S}^n(\frac{1}{\sqrt{k}})$.
The equality holds if and only if $B^+_p(r)\subset \mathcal{D}_p$, i.e., $\mathbf{i}_p\geq r$.
By the Bonnet-Myers theorem, $Diam(\mathcal{S}^n(\frac{1}{\sqrt{k}}))\leq\frac{\pi}{\sqrt{k}}$, which means
 $\overline{B^+_p(\frac{\pi}{\sqrt{k}})}=\mathcal{S}^n(\frac{1}{\sqrt{k}})$. Therefore,
$$\textmd{vol}^{d\mu}_F(B^+_p(\frac{\pi}{\sqrt{k}}))=\textmd{vol}^{d\mu}_F(\mathcal{S}^n(\frac{1}{\sqrt{k}}))
=\textmd{vol}_\mathfrak{g}(\mathbb{S}^n(\frac{1}{\sqrt{k}}))=\sigma_n(\frac{\pi}{\sqrt{k}}).$$
We deduce that $\mathbf{i}_p\geq \frac{\pi}{\sqrt{k}}$, which yields $d_F(p,q)=\frac{\pi}{\sqrt{k}}$.
\end{proof}

\begin{remark}
In \cite{Sh3}, the author studied the reversible Finsler manifolds with constant flag curvature.
For the nonreversible case, We show in Proposition 2.5 that there are infinite Randers metrics on
 $\mathcal{S}^n(\frac{1}{\sqrt{k}})$ with constant flag
curvature $K=k$ and vanishing $S$ curvature. Moreover, they have the same diameter and volume
as the Euclidean sphere, but they are not necessarily isometric to each other.
Write $F\triangleq(\mathfrak{g},W)$ if $F$ is expressed by (\ref{b}). Set
$$\mathfrak{F}:=\{F|F\triangleq(\mathfrak{g},W), W \textmd{ is a Killing vector with } \|W\|_{\mathfrak{g}}<1.\}$$
Then $\mathfrak{F}$ determines all standard  Randers spheres $(\mathcal{S}^n(\frac{1}{\sqrt{k}}),F)$,
and especially includes $(\mathbb{S}^n(\frac{1}{\sqrt{k}}),\mathfrak{g})$. Fix a Killing vector field $W$ and
let $W_a:=aW,a\in[0,\frac{1}{\|W\|_{\mathfrak{g}}})$. Then each $W_a$ is also a Killing vector field satisfying $\|W_a\|_{\mathfrak{g}}<1$
and $\big\{(\mathcal{S}^n(\frac{1}{\sqrt{k}}),F\triangleq(\mathfrak{g},W_a))\big\}$
 make up a family of standard  Randers spheres.

\end{remark}

\begin{remark}
Let $F=\alpha\phi(\frac{\beta}{\alpha})$ be an $(\alpha,\beta)$ metric.
Then, by Theorem 1.1 in \cite{CST}, a standard $(\alpha,\beta)$ sphere is actually a standard Randers sphere
if $\phi$  is a polynomial. Moreover, when $n\geq3$, then, by Theorem 0.4 in \cite{ZH}, every standard $(\alpha,\beta)$
sphere is  standard  Randers sphere for all $\phi$.
\end{remark}

\begin{theorem}
Let $(M,F,d\mu)$ be a complete connected Randers n-manifold. If the weighted Ricci curvature
and the volume form satisfy $\emph{Ric}_N\geq(N-1)k>0,$
$\lim\limits_{r\to0}\int_{S_zM}\frac{\sigma_z(r,\theta)}{r^{N-1}}d\theta=C, \forall z\in M$ for
some real number $C>0,N\in[n,\infty)$
and  $Diam(M)=\frac{\pi}{\sqrt{k}}$, then $(M,F)$ is isometric to a standard Randers
sphere.
\end{theorem}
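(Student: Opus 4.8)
The plan is to bootstrap from Theorem 2.1, which already does the hard differential-geometric work for a general Finsler manifold, and then specialize to the Randers case to pin down the metric. First I would invoke Theorem 2.1 with the given hypotheses: since $\mathrm{Ric}_N\ge(N-1)k>0$, the volume-form condition $\lim_{r\to0}\int_{S_zM}\sigma_z(r,\theta)r^{1-N}\,d\theta=C$ holds at every $z$, and $\mathrm{Diam}(M)=\pi/\sqrt{k}$, the conclusion is that the flag curvature satisfies $K\equiv k$, the $S$-curvature vanishes, $N=n$ (by Remark 2.2), and $M$ is homeomorphic to $\mathbb{S}^n$. So $(M,F)$ is an $n$-sphere carrying a Randers metric of constant flag curvature $k$ with $S\equiv0$ and maximal diameter $\pi/\sqrt{k}$; by the definition given in the Introduction this is exactly a standard Randers sphere, once we exhibit it in navigation form.

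Next I would produce the navigation data. A Randers metric $F=\alpha+\beta$ is equivalently described by its navigation representation $F\triangleq(h,W)$ with $h$ a Riemannian metric and $W$ a vector field with $\|W\|_h<1$; under this correspondence, by the results of Bao--Robles--Shen, $F$ has constant flag curvature $k$ if and only if $h$ has constant sectional curvature $k$ \emph{and} $W$ is a Killing field of $h$ (the Killing condition being equivalent to $S\equiv0$ for the Busemann--Hausdorff volume form, as recalled in the proof of Proposition 2.5). Thus from $K\equiv k$ and $S\equiv0$ I get that the navigation metric $h$ is a space form of curvature $k$ on $\mathbb{S}^n$, hence (by the classification of complete simply connected space forms, $n$-sphere topology being already known) is isometric to the standard round metric $\mathfrak{g}$ on $\mathbb{S}^n(1/\sqrt{k})$, and $W$ is a $\mathfrak{g}$-Killing field with $\|W\|_{\mathfrak{g}}<1$. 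Feeding $(\mathfrak{g},W)$ into formula (\ref{b}) recovers $F$ up to the isometry that identifies $h$ with $\mathfrak{g}$; this says precisely that $(M,F)$ is isometric to $(\mathcal{S}^n(1/\sqrt{k}),F)$ for this navigation data. Finally, by Proposition 2.5 such a metric indeed has $\mathrm{Diam}=\pi/\sqrt{k}$ and $S\equiv0$, so it genuinely qualifies as a standard Finsler sphere, confirming consistency.

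The main obstacle I anticipate is the step translating "$F$ Randers, $K\equiv k$" into "navigation metric is a space form and $W$ is Killing": this is not something proved in the excerpt, so I would need to cite the Bao--Robles--Shen navigation description of Randers metrics of constant flag curvature precisely, and be careful about the fact that a priori $k$ is only known to be the flag curvature of $F$, not yet of $h$ — the Bao--Robles--Shen formula relates the flag curvature of $F$ to the sectional curvature of $h$ exactly when $W$ is Killing, so one has to make sure there is no circularity. The cleanest route is: the vanishing of the $S$-curvature for the Busemann--Hausdorff volume form forces $W$ to be $h$-Killing (Proposition 2.5's first line), and \emph{then} the Bao--Robles--Shen theorem gives $K_F\equiv k \iff K_h\equiv k$; with $K_h\equiv k>0$ on a manifold homeomorphic to $\mathbb{S}^n$, completeness and simple connectivity yield $h\cong\mathfrak{g}$ on $\mathbb{S}^n(1/\sqrt{k})$. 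A secondary, purely bookkeeping point is checking that the isometry provided by Theorem 2.1 (a homeomorphism built from $\exp_p$) can be upgraded to a genuine Finsler isometry once the metric is identified in navigation form; but since being isometric to a standard Randers sphere only requires producing \emph{some} isometry and we have now matched the full navigation data $(\mathfrak{g},W)$, this is immediate.
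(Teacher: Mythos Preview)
Your approach is essentially the paper's own: invoke Theorem~2.1 to get $K\equiv k$, $S\equiv 0$, and $M$ homeomorphic to $\mathbb{S}^n$, then pass to the navigation data $(h,W)$, use $S=0$ to force $W$ Killing, use $K\equiv k$ to force $K_h\equiv k$, and conclude $h$ is the round metric $\mathfrak{g}$ on $\mathbb{S}^n(1/\sqrt{k})$ by the space-form classification.

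There is one genuine logical gap, however. Theorem~2.1 yields $S\equiv 0$ \emph{for the given measure $d\mu$}, which in the hypotheses of Theorem~2.8 is an arbitrary volume form satisfying the limit condition, not assumed to be Busemann--Hausdorff. But the equivalence ``$S=0 \Longleftrightarrow W$ is $h$-Killing'' and the curvature formula $K_h=k+c^2$, $S_{BH}=(n+1)cF$ you want to use are tied specifically to the Busemann--Hausdorff measure. The paper closes this gap by citing Ohta's result \cite{O2} that on a Randers space only constant multiples of the Busemann--Hausdorff measure can have identically vanishing $S$-curvature; hence $d\mu$ is forced to be (a multiple of) $d\mu_{BH}$, and only then does ``$S=0\Rightarrow W$ Killing'' become available. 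Once you insert that citation, your argument coincides with the paper's.
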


\begin{proof}
Suppose that the Randres metric $F$ is given by
\begin{align}\label{10}
F=\frac{\sqrt{\lambda h^2+W_0^2}}{\lambda}-\frac{W_0}{\lambda},\quad W_0=W_iy^i,
\end{align}
where $h=\sqrt{h_{ij}(x)y^iy^j}$ is a Riemannian metric, $W=W^i\frac{\partial}{\partial x^i}$ is
a vector field on $M$, and
 $$W_i=h_{ij}W^j,\quad \lambda:=1-W_i W^i=1-h(x,W)^2.$$
Under the condition of Theorem 2.8, it follows from Theorem 2.1  that
the flag curvature of $F$ is $K=k$ and $S=0$. First, according to Theorem 1.1 in \cite{O2},
only (constant multiplications of) the Busemann-Hausdorff measures can satisfy $S\equiv0$ on Randers spaces.
Thus we might as well suppose that $d\mu$ is the Busemann-Hausdorff volume form and $S_{BH}=0$, which
 is an equivalence that $W$ is a Killing vector fields
on $M$. Second, for a Randers metric $F$ expressed above, it follows from \cite{BCS} that $F$ has
constant flag curvature $K=k$ if and only if $h$ has constant sectional curvature $K_h=k+c^2$
and $S_{BH}=(n+1)cF$. Thus $K_h=k$. By Theorem 2.1, $M$ is homeomorphic to $\mathbb{S}^n$.
As a result, $(M,h)$ is a compact simply connected Riemannian manifold of
sectional curvature $K_h=k$.  Therefore, $(M,h)$ is isometric to the Euclid sphere
$(\mathbb{S}^n(\frac{1}{\sqrt{k}}),\mathfrak{g})$, where $\mathfrak{g}$ denotes the standard sphere metric on
$\mathbb{S}^n(\frac{1}{\sqrt{k}})$. Now the Randers  metric $F$ defined on $\mathbb{S}^n(\frac{1}{\sqrt{k}})$ is given by
$$F=\frac{\sqrt{\lambda \mathfrak{g}^2+W_0^2}}{\lambda}-\frac{W_0}{\lambda},$$
where $W$ is a Killing vector field on $(\mathbb{S}^n(\frac{1}{\sqrt{k}}),\mathfrak{g})$.
\end{proof}
\vspace{3mm}

\hspace{-4mm}\emph{Proof of Theorem 0.1}.
The first part of Theorem 0.1 follows from Corollary 2.3 directly.  Notice that Theorem 0.4 in \cite{ZH},
 shows that,  a regular non-Randers $(\alpha, \beta)$-metric  with isometric $S$-curvature and
scalar flag curvature on a Finsler $n$-manifold $(n\geq3)$ must be a Minkowski metric. Combining this with Theorem 2.8
the second part of Theorem 0.1 follows.

$\hspace{120mm}\square$

\section {Some applications on the first eigenvalue}
In this section, we use the maximum diam theorem to describe the rigidity of the Finsler manifolds
on which the first eigenvalue attains its lower bound. First we give the following:
\begin{lemma}
Let $(\mathfrak{S}^n(\frac{1}{\sqrt{k}}),F,d\mu)$ be a standard Finsler sphere, and $r(x)=d_F(p,x)$ be the distance function from a fixed
 point $p\in\mathfrak{S}^n$. Then 
$$\tilde{f}=-\cos(\sqrt{k}r),\quad 0\leq r\leq\frac{\pi}{\sqrt{k}}$$
is a 1-st eigenfunction with $\lambda_1= nk$.
\end{lemma}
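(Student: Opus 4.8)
The plan is to verify directly that $\tilde f=-\cos(\sqrt kr)$ satisfies the Finsler eigenvalue equation $\Delta\tilde f=-\lambda_1\tilde f$ with $\lambda_1=nk$ on the standard Finsler sphere, where $\Delta$ is the Finsler--Laplacian associated to $d\mu$. The key structural facts, supplied by Theorem~2.1 and the definition of a standard Finsler sphere, are that $K\equiv k$, $S\equiv0$, and (from the proof of Theorem~2.1 applied with $N=n$) that for a fixed point $p$ the forward distance function $r=d_F(p,x)$ satisfies the rigid Laplacian identity $\Delta r=(n-1)\sqrt k\cot(\sqrt kr)$ on $\mathfrak S^n\setminus\{p,q\}$, where $q$ is the unique point with $d_F(p,q)=\tfrac{\pi}{\sqrt k}$, and moreover $q=\mathrm{Cut}(p)$, so $r$ is smooth off $\{p,q\}$.

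First I would compute $\nabla\tilde f$. Since $\tilde f=\varphi(r)$ with $\varphi(t)=-\cos(\sqrt kt)$ and $\varphi'(t)=\sqrt k\sin(\sqrt kt)\ge 0$ on $[0,\tfrac{\pi}{\sqrt k}]$, the chain rule for the Legendre transform gives $\nabla\tilde f=\varphi'(r)\nabla r$ on $M\setminus(\{p\}\cup\mathrm{Cut}(p))$; here one uses that $\varphi'\ge0$ so that $\nabla(\varphi\circ r)$ really is $\varphi'(r)\nabla r$ (the positive-homogeneity of $F$ makes $\nabla$ $1$-homogeneous but not linear, so the sign of $\varphi'$ matters). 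Next, using $\mathrm{div}(u\,V)=u\,\mathrm{div}V+du(V)$ and $g_{\nabla r}(\nabla r,\nabla r)=1$, compute
\begin{align*}
\Delta\tilde f&=\mathrm{div}\big(\varphi'(r)\nabla r\big)=\varphi'(r)\,\Delta r+\varphi''(r)\,d r(\nabla r)\\
&=\varphi'(r)\,(n-1)\sqrt k\cot(\sqrt kr)+\varphi''(r).
\end{align*}
With $\varphi'(r)=\sqrt k\sin(\sqrt kr)$ and $\varphi''(r)=k\cos(\sqrt kr)$, the first term is $(n-1)k\cos(\sqrt kr)$, so $\Delta\tilde f=(n-1)k\cos(\sqrt kr)+k\cos(\sqrt kr)=nk\cos(\sqrt kr)=-nk\,\tilde f$. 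Thus pointwise on $\mathfrak S^n\setminus\{p,q\}$ we get $\Delta\tilde f=-nk\tilde f$.

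The remaining point — and the one I expect to be the main obstacle — is to promote this pointwise identity off the two poles to the global weak $W^{1,2}$ statement that defines $\Delta$, i.e. to check $\int_M g_{\nabla\tilde f}(\nabla\tilde f,\nabla\phi)\,d\mu=nk\int_M\tilde f\phi\,d\mu$ for all $\phi\in W^{1,2}(M)$, and to confirm $\tilde f\in W^{1,2}(M)$ with no distributional contribution concentrated at $p$ or $q$. Here I would note that $\nabla\tilde f=\varphi'(r)\nabla r$ is bounded (since $|\varphi'|\le\sqrt k$ and $F(\nabla r)=1$) and extends continuously by $0$ at $p$ and $q$ because $\varphi'(0)=\varphi'(\tfrac{\pi}{\sqrt k})=0$; the pair of points has measure zero and, $\nabla\tilde f$ being bounded, carries no capacity, so integration by parts against $\phi$ produces no boundary term at the poles. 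Hence the pointwise identity integrates to the weak one, giving $\lambda_1\le nk$; combined with Lemma~1.2 (the Myers-type bound already forces $\lambda_1\ge nk$ by the standard Lichnerowicz--Obata argument under $\mathrm{Ric}_n\ge(n-1)k$, cf.\ \cite{YHS1}), we conclude $\lambda_1=nk$ and $\tilde f$ is a genuine first eigenfunction. One should also double-check orientation conventions: it is the \emph{forward} distance $r=d_F(p,\cdot)$ that satisfies the equality case of the Laplacian comparison, which is exactly what the displayed identity $\Delta r=(n-1)\sqrt k\cot(\sqrt kr)$ from the proof of Theorem~2.1 provides, so no appeal to the reverse metric is needed for this computation.
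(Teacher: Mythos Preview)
Your proof is correct and follows essentially the same route as the paper: both use the rigid identity $\Delta r=(n-1)\sqrt{k}\cot(\sqrt{k}r)$ from the proof of Theorem~2.1, note that $\varphi'(r)\geq 0$ so that $\nabla\tilde f=\varphi'(r)\nabla r$, and compute $\Delta\tilde f=\varphi'(r)\Delta r+\varphi''(r)=-nk\tilde f$. The only difference in emphasis is that the paper additionally verifies $\int_{\mathfrak{S}^n}\tilde f\,d\mu=0$ directly, using the separation $\sigma_p(r,\theta)=C(\theta)(\sin(\sqrt{k}r)/\sqrt{k})^{n-1}$ of the volume density that also comes out of the equality case in Theorem~2.1, whereas you instead take care of the weak formulation at the two poles (which the paper leaves implicit); these are complementary refinements of the same argument rather than different approaches.
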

\begin{proof}
For a standard Finsler sphere $\mathfrak{S}^n(\frac{1}{\sqrt{k}})$,
it follows from  the proof of Theorem 2.1 that $\Delta r=(n-1)\sqrt{k}\cot(\sqrt{k}r)$.
Noticed that the volume form satisfies (see also the proof of Theorem 2.1)
$\frac{\partial}{\partial r}\log\sigma_p(r,\theta)=\frac{\partial}{\partial r}\log\tilde{\sigma}(r)$, which yields
$$\frac{\sigma_p(R,\theta)}{\tilde{\sigma}(R)}=\frac{\sigma_p(r,\theta)}{\tilde{\sigma}(r)}:=C(\theta),
\quad r\leq R\leq\frac{\pi}{\sqrt{k}},$$
where $\tilde{\sigma}(r)=(\frac{\sin(\sqrt{k}r)}{\sqrt{k}})^{n-1}$.
Therefore,
\begin{align}
\int_{\mathfrak{S}^n}\tilde{f}d\mu&=\int_0^{\frac{\pi}{\sqrt{k}}}\int_{S_pM}\tilde{f}\sigma_p(r,\theta)drd\theta
=\int_0^{\frac{\pi}{\sqrt{k}}}\int_{S_pM}\tilde{f}C(\theta)\tilde{\sigma}(r)drd\theta\nonumber\\
&=-\int_0^{\frac{\pi}{\sqrt{k}}}\cos(\sqrt{k}r)(\frac{\sin(\sqrt{k}r)}{\sqrt{k}})^{n-1}dr\int_{S_pM}C(\theta)d\theta=0.\nonumber
\end{align}
Moreover, $\nabla \tilde{f}=\sqrt{k}\sin(\sqrt{k}r)\nabla r,0< r<\frac{\pi}{\sqrt{k}}$, which means
that $\nabla \tilde{f}$  and $\nabla r$ have the same direction. Thus
\begin{align}
\Delta \tilde{f}&=\tilde{f}'\Delta r+\tilde{f}''=\sqrt{k}\sin(\sqrt{k}r)\times (n-1)\sqrt{k}\cot(\sqrt{k}r)+k\cos(\sqrt{k}r)\nonumber\\
&=nk\cos(\sqrt{k}r)=-nk\tilde{f}.\nonumber
\end{align}
Therefore, we obtain the first eigenfunction $\tilde{f}$ of $(\mathfrak{S}^n,F,d\mu)$.

\end{proof}

\begin{theorem}
Let $(M,F,d\mu)$ be a complete connected Finsler n-manifold with the Busemann-Hausdorff volume form.
If the weighted Ricci curvature satisfies
$\emph{Ric}_n\geq(n-1)k>0$, then the first eigenvalue of Finsler-Laplacian
$$\lambda_1\geq nk.$$
The equality holds if and only if $(M, F)$ is isometric to a standard Finsler sphere.
\end{theorem}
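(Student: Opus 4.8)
The plan is to prove the bound $\lambda_1\ge nk$ by the Finsler version of Lichnerowicz's estimate, then to exploit the equality case to force $\mathrm{Diam}(M)=\pi/\sqrt{k}$ and finish by Corollary 2.3; the ``if'' direction will follow at once from Lemma 3.1. I would first note that $\mathrm{Ric}_n\ge (n-1)k$ already forces $S\equiv 0$ on $M$ (otherwise $\mathrm{Ric}_n=-\infty$ somewhere), so $\dot S\equiv 0$ and $\mathrm{Ric}=\mathrm{Ric}_\infty\ge (n-1)k$. Let $f$ be a first eigenfunction, $\Delta f=-\lambda_1 f$; integrating this identity gives $\int_M f\,d\mu=0$, and $f$ is smooth on $M_f:=\{x\in M:\nabla f(x)\ne 0\}$. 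On $M_f$ I would apply the Ohta--Sturm Bochner--Weitzenb\"ock formula with reference vector $\nabla f$ and integrate over $M$; using $\mathrm{tr}_{g_{\nabla f}}\nabla^2 f=\Delta f$ (as $S\equiv 0$), $\|\nabla^2 f\|_{g_{\nabla f}}^2\ge \tfrac1n(\Delta f)^2$, $\int_M(\Delta f)^2\,d\mu=\lambda_1^2\int_M f^2\,d\mu$ and $\int_M F(\nabla f)^2\,d\mu=-\int_M f\,\Delta f\,d\mu=\lambda_1\int_M f^2\,d\mu$, one obtains exactly as in the Riemannian computation $\frac{n-1}{n}\lambda_1\ge (n-1)k$, i.e. $\lambda_1\ge nk$. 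This is in essence the estimate of \cite{YHS1}.

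For the equality case, suppose $\lambda_1=nk$. Then every inequality above becomes an equality; in particular $\|\nabla^2 f\|_{g_{\nabla f}}^2=\tfrac1n(\Delta f)^2$ on $M_f$, which forces the Obata-type identity $\nabla^2 f=\frac{\Delta f}{n}g_{\nabla f}=-kf\,g_{\nabla f}$ on $M_f$. I would then differentiate $F(\nabla f)^2+kf^2$ and use this identity to see it has zero differential on $M_f$, hence by continuity (across the critical set of $f$, a closed set of measure zero) is a positive constant on $M$; after rescaling $f$ by a positive factor one may assume $F(\nabla f)^2+kf^2\equiv k$. Thus $|f|\le 1$ and $\nabla f=0$ precisely where $f=\pm1$; since $M$ is compact and $f$ is nonconstant, $f$ attains $-1$ at some $p$ and $+1$ at some $q$.

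Next I would introduce $\rho:=\frac1{\sqrt{k}}\arccos(-f)\in[0,\tfrac{\pi}{\sqrt{k}}]$, so that $f=-\cos(\sqrt{k}\,\rho)$, $\rho(p)=0$, $\rho(q)=\tfrac{\pi}{\sqrt{k}}$, and $\rho$ is continuous on $M$, smooth on $M_f$. On $M_f$ the Legendre identity gives $F^*(df)=F(\nabla f)=\sqrt{k}\,\sin(\sqrt{k}\,\rho)>0$, hence $F^*(d\rho)\equiv 1$; therefore for every piecewise-$C^1$ curve $c$ from $p$ to $q$ one has $L_F(c)=\int F(\dot c)\,dt\ge \int d\rho(\dot c)\,dt=\rho(q)-\rho(p)=\tfrac{\pi}{\sqrt{k}}$, so $d_F(p,q)\ge \tfrac{\pi}{\sqrt{k}}$. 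Combined with $\mathrm{Diam}(M)\le \tfrac{\pi}{\sqrt{k}}$ from Lemma 1.2 this gives $\mathrm{Diam}(M)=\tfrac{\pi}{\sqrt{k}}$, and Corollary 2.3 then shows $(M,F)$ is isometric to a standard Finsler sphere. Conversely, on a standard Finsler sphere Lemma 3.1 provides an eigenfunction with eigenvalue $nk$, so $\lambda_1\le nk$, and with the lower bound $\lambda_1=nk$. For the final assertion, when $n\ge 3$ and $F$ is an $(\alpha,\beta)$-metric the sphere so obtained has constant flag curvature $k>0$ and vanishing $S$-curvature, so it cannot be Minkowskian; by Theorem 0.4 of \cite{ZH} the metric is then Randers, and Theorem 2.8 identifies $(M,F)$ with a standard Randers sphere.

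I expect the genuine obstacle to be the equality (``only if'') direction, specifically the regularity of the nonlinear first eigenfunction $f$: one must justify that the integrated Bochner formula and, crucially, its equality case are legitimate, handling the contributions from $\partial M_f$ and from the cut locus and the behaviour of $f$ on the critical set, exactly the kind of delicate point already met in Section 2. A further difficulty is that $F$ is not reversible, so $\rho$ and its gradient curves behave differently forwards and backwards; one should be prepared to argue, as in the proof of Theorem 2.1, by passing to the reverse metric $\overleftarrow F$ and keeping careful track of forward versus backward distances. Once $\mathrm{Diam}(M)=\tfrac{\pi}{\sqrt{k}}$ is in hand, the rigidity itself is a direct appeal to Corollary 2.3.
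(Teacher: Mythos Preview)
Your proposal is correct and follows the same logical skeleton as the paper: the lower bound $\lambda_1\ge nk$, then equality $\Rightarrow$ $\mathrm{Diam}(M)=\pi/\sqrt{k}$, then Corollary~2.3 for the ``only if'' direction, and Lemma~3.1 for the ``if'' direction. The difference is purely one of exposition: the paper simply cites \cite{YHS1} for both the Lichnerowicz-type estimate and for the implication ``$\lambda_1=nk\Rightarrow\mathrm{Diam}(M)=\pi/\sqrt{k}$'', whereas you reconstruct those arguments in detail (Bochner with $S\equiv 0$, the Obata identity $\nabla^2 f=-kf\,g_{\nabla f}$, the conserved quantity $F(\nabla f)^2+kf^2$, and the eikonal estimate via $F^*(d\rho)=1$). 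Your self-contained treatment is a faithful expansion of what \cite{YHS1} supplies, and the regularity caveats you flag are exactly the delicate points that a full write-up must address; note only that your final paragraph on $(\alpha,\beta)$-metrics belongs to Theorem~0.2 rather than to the statement at hand.
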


\begin{proof}
The estimate of the first eigenvalue is proved in \cite{YHS1}. If the equality holds, we deduce that
$Diam(M)=\frac{\pi}{\sqrt{k}}$(\cite{YHS1}). Then by Corollary 2.3, $(M, F)$ is isometric to
$\mathfrak{S}^n(\frac{1}{\sqrt{k}})$. Conversely, it follows from Lemma 3.1.
\end{proof}

\begin{theorem}
Let $(M,F,d\mu)$ be a complete connected Randers n-manifold.
If the weighted Ricci curvature satisfies $\emph{Ric}_N\geq(N-1)k>0$ for
some real number $N\in[n,\infty)$,
then the first eigenvalue of Finsler-Laplacian
$$\lambda_1\geq Nk.$$
Moreover, if the volume form satisfies
$\lim\limits_{r\to0}\int_{S_zM}\frac{\sigma_z(r,\theta)}{r^{N-1}}d\theta=C, \forall z\in M$,
then the equality holds if and only if  $(M,F)$ is isometric to a standard Randers
sphere.
\end{theorem}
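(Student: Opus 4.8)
The plan is to establish the eigenvalue estimate $\lambda_1\geq Nk$ first, and then to upgrade the equality case to a rigidity statement by reducing it, via a diameter computation, to Theorem 2.8 and Lemma 3.1. For the lower bound, I would invoke the gradient-comparison / Reilly-type estimate for the Finsler Laplacian under a weighted Ricci bound $\mathrm{Ric}_N\geq(N-1)k>0$; this is exactly the estimate proved in \cite{YHS1} (for $N=n$ it is the bound used in Theorem 3.2, and the general $N\in[n,\infty)$ version follows the same Bochner-type argument, using Lemma 1.1 in place of the $N=n$ Laplacian comparison). So the first sentence of the proof simply cites \cite{YHS1} for $\lambda_1\geq Nk$, just as Theorem 3.2 does for $\lambda_1\geq nk$.

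For the equality case, suppose $\lambda_1=Nk$. Following \cite{YHS1}, equality in the eigenvalue estimate forces $Diam(M)=\frac{\pi}{\sqrt{k}}$: the first eigenfunction $u$, suitably normalized, must saturate the comparison inequalities along minimal geodesics, and integrating these out yields that some pair of points is at distance $\frac{\pi}{\sqrt{k}}$. Once $Diam(M)=\frac{\pi}{\sqrt{k}}$ is in hand, together with the hypotheses $\mathrm{Ric}_N\geq(N-1)k>0$ and $\lim_{r\to0}\int_{S_zM}\frac{\sigma_z(r,\theta)}{r^{N-1}}\,d\theta=C$ for all $z\in M$, we are precisely in the setting of Theorem 2.8, whose conclusion is that $(M,F)$ is isometric to a standard Randers sphere $\mathcal{S}^n(\frac{1}{\sqrt{k}})$. (Note that Theorem 2.1, invoked inside the proof of Theorem 2.8, also gives $N=n$, so the bound $\lambda_1\geq Nk$ is really $\lambda_1\geq nk$ on the extremal manifold; this is consistent with Remark 2.2.)

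For the converse, assume $(M,F,d\mu)$ is isometric to a standard Randers sphere $(\mathcal{S}^n(\frac{1}{\sqrt{k}}),F,d\mu)$ with the Busemann-Hausdorff volume form. By Proposition 2.5 this is a standard Finsler sphere, so Lemma 3.1 applies verbatim: taking $r(x)=d_F(p,x)$ for a fixed $p$, the function $\tilde f=-\cos(\sqrt{k}r)$ satisfies $\int_{\mathcal{S}^n}\tilde f\,d\mu=0$ and $\Delta\tilde f=-nk\,\tilde f$, hence $\lambda_1\leq nk=Nk$ (again using $N=n$ on the sphere). Combined with the general lower bound this gives $\lambda_1=Nk$, and $\tilde f$ is an explicit first eigenfunction.

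The main obstacle is the implication "$\lambda_1=Nk\Rightarrow Diam(M)=\frac{\pi}{\sqrt{k}}$": extracting the diameter equality from equality in a Bochner/gradient-comparison argument is the delicate analytic step, since one must track the full chain of inequalities (the Laplacian comparison along the flow of $\nabla u$, the Cauchy–Schwarz step in Bochner, and the boundary/integral terms) and show they collapse only when a minimal geodesic of length $\frac{\pi}{\sqrt{k}}$ exists. Fortunately this is carried out in \cite{YHS1} and can be cited; the remaining steps are then bookkeeping that invokes Theorem 2.8, Proposition 2.5 and Lemma 3.1. A minor point to state carefully is that the Busemann-Hausdorff hypothesis in Lemma 3.1 / Proposition 2.5 is automatic once $S\equiv0$ on a Randers space (Theorem 1.1 in \cite{O2}), so the converse direction needs no extra assumption on $d\mu$ beyond what Theorem 2.8 already produces.
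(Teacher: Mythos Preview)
Your proposal is correct and follows the paper's own route: the lower bound and the implication $\lambda_1=Nk\Rightarrow Diam(M)=\frac{\pi}{\sqrt{k}}$ are both cited from \cite{YHS1}, the rigidity then comes from Theorem~2.8, and the converse is handled via Proposition~2.5 and Lemma~3.1 exactly as you outline (including the observation that $N=n$ on the extremal manifold).

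The one substantive addition in the paper's proof that you omit is a second, independent construction of a first eigenfunction on a standard Randers sphere, obtained not from Lemma~3.1 but by a Rayleigh-quotient argument: if the navigation Killing field $W$ is chosen to be a rotation field $X$ fixing the poles $p,q$ of the round sphere $(\mathbb{S}^n(\frac{1}{\sqrt{k}}),\mathfrak{g})$, then the radial first eigenfunction $f$ of $\mathfrak{g}$ satisfies $W(f)=0$, whence $F(\nabla f)=F^*(df)=\mathfrak{g}(\nabla^{\mathfrak{g}}f)$ and, since $d\mu=dV_{\mathfrak{g}}$, the Finsler and Riemannian Rayleigh quotients of $f$ coincide, giving $\lambda_1\leq nk$ directly. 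This is not needed for the theorem---Lemma~3.1 already covers every standard Finsler sphere---but it produces a second explicit eigenfunction (cf.\ Remark~3.5) and illustrates a different mechanism. Your use of Lemma~3.1 alone is cleaner and suffices.
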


\begin{remark}
Theorem 3.2 shows that the lower bound of the first eigenvalue of Finsler-Laplacian can be attained
in a standard Finsler sphere. However, we are  still unable to characterize the sphere  in more details.
In Theorem 3.3, we narrow the scope to Randers manifolds. This is because
Randers spheres are completely clear even though their quantity is also infinite.
\end{remark}

\begin{proof}
If the equality holds, we have $Diam(M)=\frac{\pi}{\sqrt{k}}$ (\cite{YHS1}). Then the conclusion follows from
 Theorem 2.8 directly.

To prove the reverse side, we  point out that, by Proposition 2.5, for a standard Randers sphere
$\mathcal{S}^n(\frac{1}{\sqrt{k}})$, $K=k,S=0$ and $\textmd{Ric}_N=\textmd{Ric}=(n-1)k$,
and thus $N=n,\lambda_1\geq nk$. Therefore, we only need to construct a Randers metric on the
sphere $\mathbb{S}^n(\frac{1}{\sqrt{k}})$ such that the first eigenvalue of Finsler-Laplacian
attains its lower bound $nk$. In  Lemma 3.1 we have obtained the first eigenfunction
$\tilde{f}$. In the following, we want to give another 1-st eigenfunction via a different method.

Let $p,q$ be the north pole and south pole of the Euclidean sphere $(\mathbb{S}^n(\frac{1}{\sqrt{k}}),\mathfrak{g})$, respectively.
Let $\varphi(t,x)$ be the rotation transform on $\mathbb{S}^n(\frac{1}{\sqrt{k}})$ satisfying
$\varphi(t,p)=p$ and $\varphi(t,q)=q$ for any $t$. Then $\varphi(t,x)$ is a isometric
transform on $\mathbb{S}^n(\frac{1}{\sqrt{k}})$ and $X=\frac{\partial\varphi(t,x)}{\partial t}$
is a Killing vector field. It is easy to see that $X\bot\nabla^{\mathfrak{g}} \rho$ where $\rho(x)=d_\mathfrak{g}(p,x)$
is the distance function and $\nabla^{\mathfrak{g}} \rho$ is the gradient with respect to $\mathfrak{g}$.
On the other hand, it is well known that the first eigenfunction $f$ of $(\mathbb{S}^n(\frac{1}{\sqrt{k}}),\mathfrak{g})$
is radial function, i.e., $f(\rho,\theta)=f(\rho)$, where $\rho(x)=d_\mathfrak{g}(p,x)$. Thus, we have
$$X(f)=0.$$
Note that the volume form of a Rander sphere $(\mathcal{S}^n(\frac{1}{\sqrt{k}}),F,d\mu)$
is the Busemann-Hausdorff volume form.
Therefore, $d\mu=dV_{\mathfrak{g}}$ which yields
\begin{align}\label{11}
\int_M|f|^2d\mu=\int_M|f|^2dV_{\mathfrak{g}}.
\end{align}
 Recall that the dual metric of  (\ref{10}) is
$$F^{\ast}:=h^{\ast}+W^{\ast}=\sqrt{h^{ij}\xi_{i}\xi_{j}}+W^{i}\xi_{i},$$
 where $(h^{ij})=(h_{ij})^{-1}$ and $W^{i}=W_{j}h^{ij}$.
Thus, for a $C^{1}$ function $f$, we have
$$F(\nabla f)=F^{\ast}(df)=h^{\ast}(df)+W^{i}f_{i}=h(\nabla^{h} f)+W(f),$$
If $f$ is the first eigenfunction  of $(\mathbb{S}^n(\frac{1}{\sqrt{k}}),\mathfrak{g})$ and $W=X$
is the Killing vector as above, then we have $F(\nabla f)=\mathfrak{g}(\nabla^{\mathfrak{g}} f)$, which gives
\begin{align}\label{12}
\int_MF(\nabla f)^2d\mu=\int_M\mathfrak{g}(\nabla^{\mathfrak{g}} f)^2dV_{\mathfrak{g}}.
\end{align}
Combining (\ref{11}) and (\ref{12}), and noting that the first eigenvalue of
$(\mathbb{S}^n(\frac{1}{\sqrt{k}}),\mathfrak{g})$ is $nk$, we obtain
$$\lambda_1\leq\frac{\int_MF(\nabla f)^2d\mu}{\int_M|f|^2d\mu}=
\frac{\int_M\mathfrak{g}(\nabla^{\mathfrak{g}} f)^2dV_{\mathfrak{g}}}{\int_M|f|^2dV_{\mathfrak{g}}}
=nk.$$
On the other hand, we know from the first assertion of Theorem 3.2 that $\lambda_1\geq nk$.
Thus $\lambda_1=nk$.
This implies that $f$ is also the first eigenfunction  of the Randers sphere
$(\mathcal{S}^n(\frac{1}{\sqrt{k}}),F)$.
\end{proof}

The existence of the first eigenfunction of Finsler Laplacian is proved by \cite{GS}.
But so far, any explicit 1-st eigenfunction has not been found in non-Riemannian case.
It seems very difficult to do the computation since Finsler Laplacian is
a nonlinear operator.

\begin{remark}
In the proof of Lemma 3.1, we first construct an explicit 1-st eigenfunction of
Finsler Laplacian $\tilde{f}$ on the sphere $(\mathfrak{S}^n,F,d\mu)$,
which means that $-\tilde{f}$ is the first eigenfunction of the reverse sphere
$(\mathfrak{S}^n,\overleftarrow{F},d\mu)$.
 In particular, in a standard Randers sphere $(\mathcal{S}^n(\frac{1}{\sqrt{k}}),F,d\mu)$, we give two 1-st eigenfunctions
$\tilde{f}$ and $f$ (see the proof of Theorem 3.3) since they are not necessarily equal.
\end{remark}
\vspace{3mm}

\hspace{-4mm}\emph{Proof of Theorem 0.2}.\quad
It follows directly from Theorems 3.2-3.3 and Theorem 0.4 in \cite{ZH}.

$\hspace{120mm}\square$

\vspace{3mm}
\bibliographystyle{amsplain}

\end{document}